\numberwithin{equation}{section}
\newtheorem{remark}{Remark}[section]
\newtheorem{theorem}{Theorem}[section]
\newtheorem{lemma}{Lemma}[section]
\newenvironment{proof}{{\textbf{Proof}:~}}{\hfill$\Box$\\}
\begin{document}

\begin{frontmatter}
\title{Delayed finite-dimensional observer-based control of 1D parabolic PDEs via reduced-order LMIs\tnoteref{t1}}

\tnotetext[t1]{Supported by Israel Science Foundation (grant  673/19), the C. and H. Manderman Chair 
at Tel Aviv University and by the Y. and C. Weinstein Research
Institute for Signal Processing }
\author[Tel-Aviv university]{Rami Katz}\ead{rami@benis.co.il}
\author[Tel-Aviv university]{Emilia Fridman}\ead{emilia@eng.tau.ac.il}
\address[Tel-Aviv university]{School of Electrical Engineering, Tel-Aviv University, Tel-Aviv}  
%
%
\begin{keyword}                           
Distributed parameter systems, heat equation, observer-based control, time-delay, Lyapunov method.
\end{keyword}                             
%

\begin{abstract}
Recently a constructive method was introduced for  finite-dimensional observer-based control of  1D parabolic PDEs.
In this paper we present an improved method in terms of the reduced-order LMIs (that significantly shorten the computation time) and introduce predictors to manage with larger delays.
We treat the case of a 1D heat equation under Neumann actuation and non-local measurement, that has not been studied yet.
We apply modal decomposition  and prove $L^2$ exponential stability  by a direct Lyapunov method.
We provide reduced-order LMI conditions for finding the observer dimension $N$ and resulting decay rate.
The LMI dimension 
does not grow with $N$. The LMI  is always
feasible for large $N$, and feasibility for $N$ implies feasibility for $N+1$.
For the first time we manage with delayed implementation of the controller in the presence of  fast-varying  (without any constraints
on the delay-derivative) input and  output delays. To manage with larger delays, we
construct classical observer-based predictors. For the known input delay, the LMIs dimension does not grow with $N$, whereas
for unknown one the LMIs dimension grows, but it is  essentially smaller than in the existing results.
A numerical example demonstrates the efficiency of our method.
\end{abstract}
\end{frontmatter}

\section{Introduction}
{Observer-based controllers for  PDEs with  observers in the form of PDEs have been constructed  in  \cite{curtain1982finite,lasiecka2000control,
		Krstic2008} (to name a few). 
	Very attractive for practical applications finite-dimensional observer-based  controllers for parabolic systems was studied by using the modal decomposition approach  in \cite{balas1988finite,christofides2001,curtain1982finite,ghantasala2012active,harkort2011finite}.
	%
	{\color{blue} The recent papers \cite{RamiContructiveFiniteDim,Rami_CDC20,katz2020finite} on constructive LMI-based finite-dimensional  observer-based control have introduced $N$-dimensional observers, where the gains (as well as the controller gains) are based only  on the $N_0\le N$ unstable modes.
		However, the stability analysis was based on the full-order closed-loop systems. The latter led to higher-order LMIs whose dimension grows with $N$ and complicated proofs of their feasibility.}
	
	Delayed and/or sampled-data finite-dimensional controllers  were designed in \cite{
		Aut12,kang2018distributed,
		selivanov2019delayed} for distributed static output-feedback control and in \cite{
		karafyllis2018sampled,espitia2020event} for boundary state-feedback 
	control. 
	Delayed implementation of finite-dimensional observer-based controllers for the 1D heat equation
	was presented in \cite{katz2020constructiveDelay}.
	In the case of Dirichlet actuation considered in \cite{katz2020constructiveDelay}, the results were not applicable to the case where both input and output delays are fast-varying (without any constraints on the time-derivative that correspond e.g. to sampled-data and network-based control).
	{\color{blue} For boundary control in the presence of fast-varying input and output delays only infinite-dimensional PDE observers have been suggested till now \cite{katz2020boundary}.}
	%

	Large 
	input delays for PDEs can be compensated by classical predictors
	\cite{Krstic09}.
	Predictor-based controllers for ODEs that compensated an arbitrary large constant part of a delay were suggested in  \cite{karafyllis2017predictor,Mazenc13,selivanov2016observer} and
	extended to state-feedback boundary control of parabolic PDEs in \cite{lhachemi2019lmi,prieur2018feedback}.
	For coupled systems of ODEs, predictors may enlarge the constant part of the delay which preserves stability, but cannot manage
	with arbitrary large constant delays due to coupling \cite{liu2018distributed,zhu2020observer}. {\color{blue} However, the finite-dimensional observer-based predictors have not been constructed yet for PDEs.} 

In the present paper, we introduce finite-dimensional observer-based controllers for the 1D heat equation under Neumann actuation and non-local measurement.
We apply modal decomposition  to the original system (without dynamic extension)
and prove $L^2$ exponential
stability of the closed-loop system by a direct Lyapunov method.
{\color{blue} The paper  {
	contribution} to challenging finite-dimensional observer-based control can be summarized as follows:
\begin{enumerate}
	\item
	The paper introduces {\it reduced-order closed-loop} system that reveals the {\it singularly perturbed structure} of the system,  leads to 
	{\it reduced-order LMIs}, trivializes the LMIs feasibility proof and the fact that their feasibility  for the observer dimension $N$  implies  feasibility for $N+1$.
	In example,  the feasibility of the reduced-order LMIs for the delayed case
	can be easily verified for 
	$N=30$, whereas  in \cite{katz2020constructiveDelay} the corresponding conditions  could not be verified for $N=9$. Note that larger $N$ enlargers  delays that preserve the stability.
	\item 
	For the first time in the
	case of boundary  control, the results
	are applicable to {\it fast-varying input and output delays}.
	This is because the proportional controller under Neumann actuation and non-local measurement  leads to $L^2$ convergence. 
	For briefness, our results are presented for differentiable delays. However,
	via the time-delay approach to networked control \cite{Fridman14_TDS}, the same LMI conditions are applicable to networked control implementation via a zero-order-hold device, under sampled-data delayed measurements. 
	\item
	The first {\it finite-dimensional observer-based predictor} is constructed to compensate the constant part of input fast-varying delay, and this is  in the presence of the small output fast-varying  delay. We
	present the classical predictors using the reduction approach \cite{Artstein82}. We predict the future state of the  observer, whereas
	the infinite-dimensional part depends on the uncompensated large delay. 
	We consider the case of either known or unknown input delay.  For the known input delay, the LMIs dimension does not grow with $N$, whereas
	for the unknown one it grows, but is essentially smaller than in \cite{katz2020constructiveDelay}.
	An example demonstrates the efficiency of the method and shows that predictors allow for larger delays which preserve the stability.
\end{enumerate}}
Our new method 
can be applied to other classes of parabolic PDEs (see Remark \ref{rem_other PDEs} below). In the conference version of the paper \cite{katz2020Reduced} predictors were not considered.

\emph{Notations and preliminaries:}
$ L^2(0,1)$ is the Hilbert space of Lebesgue measurable and square integrable functions $f:[0,1]\to \mathbb{R} $  with inner product $\left< f,g\right>:=\scriptsize{\int_0^1 f(x)g(x)dx}$ and norm $\left\|f \right\|^2:=\left<f,f \right>$.
$H^k(0,1)$ is the Sobolev space of  functions  $f:[0,1]\to \mathbb{R} $ having $k$ square integrable weak derivative, with norm $\left\|f \right\|^2_{H^k}:=\textcolor{blue}{\sum_{j=0}^{k} \left\|\frac{\text{d}^jf}{\text{dx}^j} \right\|^2}$.
The Euclidean norm on $\mathbb{R}^n$ will be denoted by $\left|\cdot \right|$.
For  $P \in \mathbb{R}^{n \times n}$, the notation $P>0$ means that $P$ is symmetric and positive definite.
The sub-diagonal elements of a symmetric matrix are denoted by $*.$ For $U\in \mathbb{R}^{n\times n}, \ U>0$ and $X\in \mathbb{R}^n$ we denote $\left|X\right|^2_U=X^TUX$. We denote by $\mathbb{Z}_+$ the set of nonnegative integers.

Recall that the Sturm-Liouville eigenvalue problem
\begin{equation}\label{eq:SL}
\begin{array}{lll}
	&\phi''+\lambda \phi = 0,\ \ x\in [0,1]\quad ; \quad \phi'(0)=\phi'(1)=0,
\end{array}
\end{equation}
induces a sequence of eigenvalues $\scriptsize \lambda_n = n^2\pi^2, n\geq 0$ with corresponding eigenfunctions
\begin{equation}\label{eq:Eigenfunctions}
\begin{array}{lll}
	\phi_0(x) = 1, \quad
	\phi_n(x)=\sqrt{2}\cos\left(\sqrt{\lambda_n} x\right), n\geq 1.
\end{array}
\end{equation}
Moreover, the eigenfunctions form a complete orthonormal system in $L^2(0,1)$.
Given $N\in \mathbb{Z}_+$ and $h\in L^2(0,1)$ satisfying $h \overset{L^2}{=} \sum_{n=0}^{\infty}h_n\phi_n$ we will use
the notation $\left\|h\right\|_N^2 =\left\|h\right\|^2-\sum_{n=0}^Nh_n^2=\sum_{n=N+1}^{\infty}h_n^2$.

\section{Non-delayed $L^2$-stabilization}\label{Sec:NoDel}
Consider the reaction-diffusion system
\begin{equation}\label{eq:PDE1PointActNonDelayed}
	\begin{aligned}
		& z_t(x,t)=z_{xx}(x,t)+qz(x,t),\ z_x(0,t)=0, \ z_x(1,t)=u(t)
	\end{aligned}
\end{equation}
where $t\geq 0$, $x\in [0,1]$, $z(x,t)\in \mathbb{R}$ and $q\in \mathbb{R}$ is the reaction coefficient. We consider Neumann actuation with a control input $u(t)$ and non-local measurement of the form
\begin{equation}\label{eq:BoundMeasNonDelayed}
	y(t) = \left<c,z(\cdot,t)\right>, \quad c\in L^2(0,1).
\end{equation}

{Below, we prove the existence and uniqueness of a classical solution to \eqref{eq:PDE1PointActNonDelayed} (see proof after \eqref{eq:ContDef}). Therefore, we can present the solution as}
\begin{equation}\label{eq:zSolDecomp}
	z(x,t)\overset{L^2}{=}\sum_{n=0}^{\infty}z_n(t)\phi_n(x), \ \  z_n(t) = \left<z(\cdot,t),\phi_n\right>.
\end{equation}
with $\phi_n(t), \ n\in \mathbb{Z}_+$ given in \eqref{eq:Eigenfunctions} {(see e.g \cite{christofides2001,karafyllis2018sampled})}. {\color{blue}
	Differentiating $z_n(t)$ and substituting $z_t=z_{xx}+qz$  we have
	\begin{equation*}
		\begin{array}{lll}
			\dot{z}_n(t)\!=\!\int_0^1z_t(x,t)\phi_n(x)dx\!=
			\int_0^1z_{xx}(x,t)\phi_n(x)dx +qz_n(t).
		\end{array}
	\end{equation*}
	Integrating by parts twice and using the boundary conditions for $z$ and $\phi_n$ we find
	\begin{equation*}
		\begin{array}{lll}
			&\int_0^1z_{xx}(x,t)\phi_n(x)dx = -\lambda_nz_n(t)+\phi_n(1)u(t)
		\end{array}
	\end{equation*}
	which leads to}
\begin{equation}\label{eq:ZOdesPointActNonDelayed}
	\begin{aligned}
		&\dot{z}_n(t) = (- \lambda_n + q) z_n(t) + b_n u(t), \quad t\geq 0, \\
		&b_0 = 1, \ \ b_n = {(-1)}^n \sqrt{2}, \ n\in \mathbb{Z}_+.
	\end{aligned}
\end{equation}
In particular, note that
\begin{equation}\label{eq:bnNeq0}
	b_n \neq 0, \quad n\in \mathbb{Z}_+
\end{equation}
and for $N\geq 0$ the following holds:
\begin{equation}\label{eq:IntegTest}
	\begin{array}{lll}
		&\hspace{-3mm}\sum_{n=N+1}^{\infty}b_n^2\lambda_n^{-1} =  \frac{2}{\pi^2} \sum_{n=N+1}^{\infty} \frac{1}{n^2}\leq  \frac{2}{\pi^2N}.
	\end{array}
\end{equation}	
Let $\delta>0$ be a desired decay rate. Since $\lim_{n \to \infty}\lambda_n=\infty$, there exists some $N_0 \in \mathbb{Z}_+$ such that
\begin{equation}\label{eq:N0}
	-\lambda_n+q<-\delta, \quad n>N_0.
\end{equation}	
Let $N\geq N_0+1$, where  $N$ will define the dimension of the observer, whereas $N_0$ will be the dimension of the controller. We construct a $N$-dimensional observer of the form
\begin{equation}\label{eq:ZhatSeries0}
	\hat{z}(x,t): = \sum_{n=0}^{N}\hat{z}_n(t)\phi_n(x)
\end{equation}
where $\hat{z}_n(t)$ satisfy the ODEs for $t\geq 0$
\begin{equation}\label{eq:obsODENonDelayed}
	\begin{array}{lll}
		\dot{\hat{z}}_n(t) &= (-\lambda_n+q)\hat{z}_n(t) + b_nu(t)\\
		&-l_n\left[\left<\sum_{n=0}^N\hat{z}_n(t)\phi_n ,c\right> - y(t)\right],\\
		\hat{z}_n(0)&=0, \quad 0\leq n\leq N.
	\end{array}
\end{equation}
Here $l_n, \ 0\leq n\leq N$ are scalars, and $l_{N_0+1}=...=l_N=0$.
We further choose $l_{N_0+1}=...=l_N=0$. \textcolor{blue}{This choice will lead to a reduced-order closed-loop system (see \eqref{eNN0}, \eqref{eq:ClosedLoopReduced} below) with omitted ODEs for $\hat z_{N_0+1},..., \hat z_N$ and 
	will not deteriorate the performance of the closed-loop system.}
%
Let
\begin{equation}\label{eq:C0A0}
\begin{array}{ll}
&\hspace{-4mm}A_0 = \operatorname{diag}\left\{-\lambda_i+q \right\}_{i=0}^{N_0},\ L_0 = \operatorname{col}\left\{l_i\right\}_{i=0}^{N_0}\\
&\hspace{-4mm}B_0 = \operatorname{col}\left\{b_i\right\}_{i=0}^{N_0},\ C_0=\left[c_0,\dots,c_{N_0} \right], \ c_n=\left<c,\phi_n\right>.
\end{array}
\end{equation}
Assume that
\begin{equation}\label{eq:AsscnNonDelayed}
c_n \neq 0 , \quad 0\leq n \leq N_0.
\end{equation}
By the Hautus lemma $(A_0,C_0)$ is observable. We choose $L_0 = [l_0,\dots, l_{N_0}]^T$ which satisfies the  Lyapunov inequality:
\begin{equation}\label{eq:GainsDesignL}
P_{\text{o}}(A_0-L_0C_0)+(A_0-L_0C_0)^TP_{\text{o}} < -2\delta P_{\text{o}}
\end{equation}
with $0<P_{\text{o}}\in \mathbb{R}^{(N_0+1)\times (N_0+1)}$. By the Hautus lemma,  $(A_0,B_0)$ is controllable due to \eqref{eq:bnNeq0}. Let $K_0\in \mathbb{R}^{1\times (N_0+1)}$ satisfy the Lyapunov inequality
\begin{equation}\label{eq:GainsDesignK}
\begin{array}{lll}
&P_{\text{c}}(A_0+B_0K_0)+(A_0+B_0K_0)^TP_{\text{c}} < -2\delta P_{\text{c}},
\end{array}
\end{equation}
where $0<P_{\text{c}}\in \mathbb{R}^{(N_0+1)\times (N_0+1)}$. We propose a 
controller
\begin{equation}\label{eq:ContDef}
\begin{aligned}
u(t)= K_0\hat{z}^{N_0}(t),\quad
\hat{z}^{N_0}(t) = \left[\hat{z}_0(t),\dots,\hat{z}_{N_0}(t) \right]^T
\end{aligned}
\end{equation}
which is based on the $N$-dimensional observer \eqref{eq:obsODENonDelayed}. Note that \eqref{eq:obsODENonDelayed} implies $u(0)=0$.

{For well-posedness we introduce the change of variables $w(x,t)=z(x,t)-
	\frac{1}{2}x^2u(t)$ leading to the equivalent PDE
	\begin{equation}\label{eq:PDEChangeVar}
		\begin{array}{lll}
			&\hspace{-5mm}w_t(x,t)=w_{xx}(x,t)+qw(x,t)+f(x,t), \ x\in [0,1], \ t\geq 0,\\
			&\hspace{-5mm}f(x,t) = -\frac{1}{2}x^2\dot{u}(t)+\left(\frac{q}{2}x^2+1\right)u(t),\\
			&\hspace{-5mm}w_x(0,t)=0, \quad w_x(1,t)=0.
		\end{array}
	\end{equation}
	Consider the operator
	\begin{equation}\label{eq:CalADef}
		\begin{array}{lll}
			&\textcolor{blue}{\mathfrak{A}}:\mathcal{D}(\textcolor{blue}{\mathfrak{A}})\subseteq L^2(0,1)\to L^2(0,1), \ \ \textcolor{blue}{\mathfrak{A}}h = -h'',\\
			&\mathcal{D}(\textcolor{blue}{\mathfrak{A}}) = \left\{h\in H^2(0,1)| h'(0)=h'(1)=0\right\}.
		\end{array}
	\end{equation}
	It is well known that $\textcolor{blue}{\mathfrak{A}}$ generates a strongly continuous semigroup on $L^2(0,1)$ \cite{pazy1983semigroups}. 
	Let $\textcolor{blue}{\mathbb{G}}=L^2(0,1)\times \mathbb{R}^{N+1}$ be a Hilbert space with the norm $\left\|\cdot\right\|_{\textcolor{blue}{\mathbb{G}}}=\sqrt{\left\|\cdot\right\|+\left|\cdot\right|}$.
	Defining the state $\xi(t)=\text{col}\left\{w(\cdot,t),\hat{z}^N(t)\right\}$, where
	\begin{equation}\label{hatzN}
		\begin{array}{lll}
			\hat{z}^{N}(t)=\text{col}\left\{\hat{z}_0(t),\dots,\hat{z}_N(t)\right\}
		\end{array}
	\end{equation}
	the closed-loop system \eqref{eq:obsODENonDelayed}, \eqref{eq:ContDef} and \eqref{eq:PDEChangeVar} can be presented as
	\begin{equation*}
		\frac{\text{d}}{\text{dt}}\xi(t)+\operatorname{diag}\left\{\textcolor{blue}{\mathfrak{A}},\textcolor{blue}{\mathfrak{B}}\right\}\xi(t) = \operatorname{col}\left\{f_1(\xi),f_2(\xi)\right\}
	\end{equation*}
	where
	\begin{equation*}
		\begin{array}{lll}
			&\textcolor{blue}{\mathfrak{B}}\xi_2 \scriptsize= \begin{bmatrix}
				-\left(A_0+B_0K_0-L_0C_0 \right) & L_0C_1\\-B_1K_0 & -A_1
			\end{bmatrix}\xi_2,\ \ \xi_2\in \mathbb{R}^{N+1},\\
			& f_1(\xi) = \scriptsize \begin{bmatrix}
				q & \upsilon & \frac{x^2}{2}K_0L_0C_1
			\end{bmatrix}\normalsize \xi-\frac{x^2}{2}K_0L_0\left<c,\xi_1 \right>,\\
			&f_2(\xi) = \text{col}\left\{L_0\left<c,\xi_1 \right>+\frac{1}{2}\left<c,x^2 \right>K_0\xi_2, 0\right\}, \\
			&\upsilon = \left(\frac{q}{2}x^2+1\right)K_0-\frac{x^2}{2}K_0(A_0+B_0K_0-L_0C_0)\\
			&\hspace{5mm}+\frac{1}{2}\left<c,x^2\right>L_0K_0.
		\end{array}
	\end{equation*}
	$f_1$ and $f_2$ are linear and, therefore, continuously differentiable. Let $z(\cdot,0)=w(\cdot,0)\in H^1(0,1)$. By Theorems  6.3.1 and 6.3.3 in \cite{pazy1983semigroups}, there exists a unique classical solution
	\begin{equation}\label{eq:zetaWP}
		\xi \in C\left([0,\infty);\textcolor{blue}{\mathbb{G}}\right)\cap C^1\left((0,\infty);\textcolor{blue}{\mathbb{G}}\right)
	\end{equation}
	satisfying $\xi(t) \in \mathcal{D}\left(\textcolor{blue}{\mathfrak{A}}\right)\times \mathbb{R}^{N+1},\ t>0$. Applying $z(x,t)=w(x,t)+\frac{1}{2}x^2u(t)$, \eqref{eq:PDE1PointActNonDelayed} and \eqref{eq:obsODENonDelayed}, subject to \eqref{eq:ContDef}, have a unique classical solution such that $z\in C([0,\infty),L^2(0,1))\cap C^1((0,\infty),L^2(0,1))$ and $z(\cdot,t)\in H^2(0,1)$ with $z_x(0,t)=0, \ z_x(1,t)=u(t)$ for $t\in[0,\infty)$.}

Let
\begin{equation}\label{eq:EstErrorNonDelayed0}
	e_n(t) = z_n(t)-\hat{z}_n(t), \ 0\leq n \leq N
\end{equation}
be the estimation error. The last term on the right-hand side of \eqref{eq:obsODENonDelayed} can be written as
\begin{equation}\label{eq:IntroZetaNonDelayed}
	\begin{array}{ll}
		&\int_0^1 c(x)\left[\sum_{n=1}^N\hat{z}_n(t)\phi_n(x)-\sum_{n=1}^{\infty}z_n(t)\phi_n(x)\right]dx \\
		&=-\sum_{n=0}^{N} c_ne_n(t)-\zeta(t), \ \zeta(t)=\sum_{n=N+1}^{\infty}c_n z_n(t).
	\end{array}
\end{equation}
Then the error equations for $0\leq n \leq N$ and $ t\ge 0$ are 
\begin{equation}\label{eq:en0}
	\begin{array}{r}
		\dot e_n(t)=(-\lambda_n+q)e_n(t)
		-l_n\left(\sum_{n=1}^{N} c_ne_n(t)+\zeta(t)\right).
	\end{array}
\end{equation}
Using the Young inequality, we obtain the bound 
\begin{equation}\label{eq:ZetaEstBoundartAct}
	\begin{array}{lll}
		\zeta^2(t) &\leq \left\|c \right\|_{N}^2\sum_{n=N+1}^{\infty}z_n^2(t).
	\end{array}
\end{equation}
Denote
\begin{equation}\label{eq:ErrDefNonDelayed0}
	\begin{array}{lllllll}
		&e^{N_0}(t)=\operatorname{col}\left\{e_n(t)\right\}_{n=1}^{N_0},\ e^{N-N_0}(t)=\operatorname{col}\left\{e_n(t)\right\}_{n=N_0+1}^{N},\\
		&\hat{z}^{N-N_0}(t)=\operatorname{col}\left\{\hat{z}_{n}(t)\right\}_{n=N_0+1}^N ,\  \mathcal{L}_0= \text{col}\left\{L_0,-L_0\right\},\\
		& \mathcal{K}_0 = \begin{bmatrix} K_0,&0_{1\times(N_0+1)}\end{bmatrix},\  A_1 = \operatorname{diag}\left\{-\lambda_{i}+q \right\}_{i=N_0+1}^N,\\
		& B_1 = \left[b_{N_0+1},\dots,b_N \right]^T,\ C_1=\left[c_{N_0+1}, \dots, c_N\right],
	\end{array}
\end{equation}
and
\begin{equation}\label{eq:ErrDefNonDelayed00}
	\begin{array}{lllllll}
		&\hspace{-3mm} F_0 = \scriptsize\begin{bmatrix}A_0+B_0K_0 & L_0C_0 \\ 0 & A_0-L_0C_0 \end{bmatrix},\  X_0(t) =\scriptsize \begin{bmatrix}
			\hat{z}^{N_0}(t)\\ e^{N_0}(t)
		\end{bmatrix}.
	\end{array}
\end{equation}
From \eqref{eq:zSolDecomp}, \eqref{eq:obsODENonDelayed}, \eqref{eq:C0A0}, \eqref{eq:ContDef}, \eqref{eq:en0}, \eqref{eq:ErrDefNonDelayed0} and \eqref{eq:ErrDefNonDelayed00} we observe that $e^{N-N_0}(t)$ satisfies
\begin{equation}\label{eNN0}
	\dot e^{N-N_0}(t)=A_1e^{N-N_0}(t)
\end{equation}
and is exponentially decaying, whereas the {\it reduced-order}  closed-loop system
\begin{equation}\label{eq:ClosedLoopReduced}
	\begin{aligned}
		&\dot{X}_0(t) = F_0X_0(t)+\mathcal{L}_0C_1e^{N-N_0}(t)+\mathcal{L}_0\zeta(t),\\
		& \dot{z}_n(t) = (-\lambda_n+q)z_n(t) +b_n\mathcal{K}_0X_0(t), \ n>N.
	\end{aligned}
\end{equation}
with $\zeta(t)$ subject to \eqref{eq:ZetaEstBoundartAct}
does not depend on $\hat{z}^{N-N_0}(t)$. Moreover, $\hat{z}^{N-N_0}(t)$ satisfies
\begin{equation}\label{eq:ztail}
	\dot {\hat{z}}^{N-N_0}(t)= A_1 \hat{z}^{N-N_0}(t)+B_1\mathcal{K}_0X_0(t)
\end{equation}
and is exponentially decaying provided $X_0(t)$ is exponentially decaying. Therefore, for stability of \eqref{eq:PDE1PointActNonDelayed} under the control law
\eqref{eq:ContDef} it is sufficient to show stability of the {reduced-order system} \eqref{eq:ClosedLoopReduced}. The latter can be considered as a {\it singularly perturbed system} with the slow state $X_0(t)$ and the fast infinite-dimensional state $z_n(t),\ n>N$.

Note that in \cite{RamiContructiveFiniteDim}, the full-order closed-loop system with the states $X_0, \hat z^{N-N_0}, e^{N-N_0}, z_n \ \ (n>N)$ was considered,
leading to full-order LMI conditions for stability.
In the present paper we derive stability conditions for the reduced-order  system \eqref{eq:ClosedLoopReduced} in terms of reduced-order LMI (see \eqref{eq:ReducedLMIs} below) for finding $N$ and the exponential decay rate $\delta$.
Differently from \cite{RamiContructiveFiniteDim}, the dimension of this LMI  will not grow with $N$.
Its feasibility for large $N$ will follow directly from the application of Schur complements. Moreover, if this LMI is feasible for $N$, it will be feasible for $N+1$. {\color{blue}To prove the exponential $L^2$-stability
	of the closed-loop system we employ the  Lyapunov function}
\begin{equation}\label{eq:PointActVNonDelayed_L2}
	\begin{array}{lll}
		&V(t)=V_0(t)+p_e\left|e^{N-N_0}(t) \right|^2,\\
		&V_0(t) = \left|X_0(t) \right|^2_{P_0}+\sum_{n=N+1}^{\infty}z^2_n(t)
	\end{array}
\end{equation}
where $0<P_0\in \mathbb{R}^{(2N_0+2)\times (2N_0+2)}$ and $0<p_e\in \mathbb{R}$. Note that $V(t)$ allows to compensate $\zeta(t)$ using \eqref{eq:ZetaEstBoundartAct}, whereas $V_0$ corresponds to \eqref{eq:ClosedLoopReduced} with $e^{N-N_0}=0$. 
\vspace{-0.3cm}
\begin{theorem}\label{Thm:PointActNonDelayed}
	Consider \eqref{eq:PDE1PointActNonDelayed} with measurement \eqref{eq:BoundMeasNonDelayed} where $c\in L^2(0,1)$ satisfies \eqref{eq:AsscnNonDelayed} and $z(\cdot,0)\in L^2(0,1)$. Let the control law be given by \eqref{eq:ContDef}. Let $\delta>0$ be a desired decay rate, $N_0\in \mathbb{Z}_+$ satisfy \eqref{eq:N0} and
	$N\ge  N_0+1$. Assume that $L_0$ and $K_0$ are obtained using \eqref{eq:GainsDesignL}  and \eqref{eq:GainsDesignK}, respectively. Let there exist $0<P_0\in \mathbb{R}^{(2N_0+2)\times (2N_0+2)}$ and a scalar $\alpha>0$ such that the following 
	LMI holds:
	\begin{equation}\label{eq:ReducedLMIs}
		\begin{array}{lll}
			&\scriptsize\begin{bmatrix}\Phi_{0} \ & P_0\mathcal{L}_0 \ & 0\\
				* \ & -2\left(\lambda_{N+1}-q-\delta\right)\left\|c\right\|_N^{-2} \ & 1\\
				* & * & -\frac{\alpha \left\|c\right\|_N^{2}}{\lambda_{N+1}}\end{bmatrix}<0,\\
			& \Phi_{0} = P_0F_0+F_0^TP_0+2\delta P_0+\frac{2\alpha}{\pi^2N} \mathcal{K}_0^T \mathcal{K}_0.
		\end{array}
	\end{equation}
	Then the solution $z(x,t)$ of \eqref{eq:PDE1PointActNonDelayed} subject to the control law  \eqref{eq:ContDef} and the corresponding observer $\hat{z}(x,t)$ given by \eqref{eq:ZhatSeries0}, \eqref{eq:obsODENonDelayed} satisfy the following inequalities:
	\begin{equation}\label{eq:L2Stability}
		\begin{array}{lll}
			\left\|z(\cdot,t) \right\|
			+\left\|z(\cdot,t)-\hat{z}(\cdot,t) \right\|\leq Me^{-\delta t}\left\|z(\cdot,0) \right\|
		\end{array}
	\end{equation}
	for some constant $M\geq 1$. Moreover, LMI \eqref{eq:ReducedLMIs} is always feasible if $N$ is large enough and feasibility of \eqref{eq:ReducedLMIs} for $N$ implies its feasibility for $N+1$.
\end{theorem}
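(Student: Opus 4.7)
The plan is to establish $\dot V + 2\delta V \le 0$ along \eqref{eq:ClosedLoopReduced} and \eqref{eNN0}, with $V$ as in \eqref{eq:PointActVNonDelayed_L2}, and then derive \eqref{eq:L2Stability} from the resulting exponential decay. Differentiating $V_0$ along the reduced-order system produces the usual $X_0^T[P_0 F_0 + F_0^T P_0 + 2\delta P_0]X_0$, an inner product with $\zeta$, and the nonstandard mode-coupling sum $2\mathcal{K}_0 X_0\sum_{n>N}b_n z_n$. I would handle this last sum by Cauchy--Schwarz combined with \eqref{eq:IntegTest}, followed by Young's inequality with parameter $\mu=2\alpha/(\pi^2 N)$; this transforms it into $\tfrac{2\alpha}{\pi^2 N}|\mathcal{K}_0 X_0|^2+\tfrac{1}{\alpha}\sum_{n>N}\lambda_n z_n^2$, exactly what is needed to form the $\Phi_0$-block of the LMI. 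The remaining tail contribution $\sum_{n>N}[\lambda_n/\alpha-2(\lambda_n-q-\delta)]z_n^2$ has a coefficient that is nonincreasing in $n$ as soon as $\alpha>1/2$, so it is bounded by its $n=N+1$ value times $\sum_{n>N}z_n^2$; invoking \eqref{eq:ZetaEstBoundartAct} converts the resulting sum into a $\zeta^2$-term. The quadratic form in $\operatorname{col}\{X_0,\zeta\}$ that emerges is precisely the Schur complement of the $(3,3)$-block of \eqref{eq:ReducedLMIs}, so the LMI delivers its negativity.

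For the full $V$, I would differentiate the $p_e|e^{N-N_0}|^2$ part using \eqref{eNN0}; definition \eqref{eq:N0} of $N_0$ ensures $A_1+A_1^T+2\delta I<0$ with margin $2(\lambda_{N_0+1}-q-\delta)$. The only remaining cross term in $\dot V$ is $2X_0^T P_0\mathcal{L}_0 C_1 e^{N-N_0}$, coming from \eqref{eq:ClosedLoopReduced}. I would split it by Young's inequality with a large weight $\nu$ on the $e^{N-N_0}$ side; strict feasibility of the reduced LMI absorbs the ensuing small perturbation of the $X_0$-block, after which $p_e$ is chosen large enough to make the $|e^{N-N_0}|^2$-coefficient negative. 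This gives $V(t)\le V(0)e^{-2\delta t}$. To conclude \eqref{eq:L2Stability}, note that $V$ dominates $|X_0|^2$, $|e^{N-N_0}|^2$ and $\sum_{n>N}z_n^2$, while the missing $\hat z^{N-N_0}$-component is recovered by variation-of-constants on \eqref{eq:ztail}, whose input $\mathcal{K}_0X_0$ decays exponentially and whose generator $A_1$ is Hurwitz. Combining with $\|z-\hat z\|^2=|e^{N_0}|^2+|e^{N-N_0}|^2+\sum_{n>N}z_n^2$ and $\|z\|\le\|z-\hat z\|+\|\hat z\|$ yields \eqref{eq:L2Stability}.

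For the feasibility claims I would perform two consecutive Schur reductions of \eqref{eq:ReducedLMIs}. Eliminating the $(3,3)$-block produces an equivalent two-block inequality whose $(2,2)$ entry equals $[\lambda_{N+1}/\alpha-2(\lambda_{N+1}-q-\delta)]/\|c\|_N^2$. For any fixed $\alpha>1/2$, this entry tends to $-\infty$ as $N\to\infty$, so a second Schur step reduces the LMI, for $N$ large, to $P_0 F_0+F_0^T P_0+2\delta P_0<0$, the remaining $\mathcal{K}_0^T\mathcal{K}_0$-term in $\Phi_0$ vanishing with $N$. This ODE Lyapunov inequality follows from \eqref{eq:GainsDesignL} and \eqref{eq:GainsDesignK} by exploiting the block-triangular structure of $F_0$ in \eqref{eq:ErrDefNonDelayed00}: taking $P_0=\operatorname{diag}(P_{\text{c}},\tau P_{\text{o}})$ and choosing $\tau>0$ large renders the full block matrix negative definite. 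Monotonicity in $N$ is immediate once one observes that $\|c\|_N^2$ is nonincreasing, $\lambda_{N+1}$ is increasing and $2/(\pi^2 N)$ is decreasing, so every $N$-dependent block of \eqref{eq:ReducedLMIs} moves in the favourable direction when $N$ is replaced by $N+1$. The main technical care lies in the first paragraph: keeping track of the sign condition $\alpha>1/2$, of the monotonicity of the tail coefficient, and of the correct matching of Young weights to the $\Phi_0$ entry is where bookkeeping errors are easiest to make.
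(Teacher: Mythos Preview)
Your proposal is correct and follows essentially the same route as the paper's proof: differentiate $V$, apply Young's inequality with weight $\alpha$ to the cross sum using \eqref{eq:IntegTest}, bound the tail via monotonicity and \eqref{eq:ZetaEstBoundartAct}, identify the resulting $(X_0,\zeta)$ quadratic form with $\Omega_1$ (the Schur complement of the $(3,3)$-block of \eqref{eq:ReducedLMIs}), and argue feasibility by successive Schur reductions together with Hurwitzness of $F_0$. The only noticeable difference is in the treatment of the $e^{N-N_0}$ cross term $2X_0^TP_0\mathcal{L}_0C_1e^{N-N_0}$: the paper keeps it inside the full matrix $\Psi$ in the augmented variable $(X_0,\zeta,e^{N-N_0})$ and then applies a Schur complement with $p_e\to\infty$ to show $\Psi<0\Leftrightarrow\Omega_1<0$, whereas you split the cross term by Young's inequality and absorb the $X_0$-perturbation into the strict inequality $\Omega_1<0$ before choosing $p_e$ large. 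Both are standard and equivalent; the paper's version has the slight advantage of giving an ``iff'', while yours makes the role of strictness more visible. Your explicit construction $P_0=\operatorname{diag}(P_{\mathrm c},\tau P_{\mathrm o})$ for the feasibility part is a welcome detail that the paper only alludes to by saying $F_0$ is Hurwitz.
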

\vspace{-0.3cm}
\begin{proof}
	Differentiating $V_0(t)$ along \eqref{eq:ClosedLoopReduced} we obtain
	\begin{equation}\label{eq:PointActStabAnalysisNonDelayed_NeummanBoundary}
		\begin{array}{lll}
			\dot{V}_0+2\delta V_0 =  X_0^T(t)\left[P_0F_0 +F_0^TP_0+2\delta P_0\right]X_0(t)\\
			+2X_0^T(t)P_0\mathcal{L}_0\zeta(t) +2\sum_{n=N+1}^{\infty}(-\lambda_n+q+\delta)z_n^2(t)\\
			+2\sum_{n=N+1}^{\infty}z_n(t)b_n\mathcal{K}_0X_0(t)+2X_0^T(t)P_0\mathcal{L}_0C_1e^{N-N_0}(t).
		\end{array}
	\end{equation}	
	The Young inequality implies
	\begin{equation}\label{eq:PointActCrosTermNonDelayed}
		\begin{aligned}
			&2\sum_{n=N+1}^{\infty} z_n(t)b_n\mathcal{K}_0X_0(t) = 2\sum_{n=N+1}^{\infty} \lambda_n^{\frac{1}{2}}z_n(t) \frac{b_n}{\lambda_n^\frac{1}{2}}\mathcal{K}_0X_0(t) \\
			&\overset{\eqref{eq:IntegTest}}{\leq}\frac{1}{\alpha}\sum_{n=N+1}^{\infty} \lambda_n z_n^2(t) + \frac{2\alpha}{\pi^2N}  \left|\mathcal{K}_0X_0(t) \right|^2,
		\end{aligned}
	\end{equation}
	where $\alpha>0$. From monotonicity of $\lambda_n, \ n\in \mathbb{Z}_+$ we have
	\begin{equation}\label{eq:zetaCompensate0}
		\begin{array}{lll}
			&2\sum_{n=N+1}^{\infty}\left(-\lambda_n+q+\delta+\frac{1}{2\alpha}\lambda_n \right)z_n^2(t)\\
			&\overset{\eqref{eq:ZetaEstBoundartAct}}{\leq}2\left(-\lambda_{N+1}+q+\delta+\frac{1}{2\alpha}\lambda_{N+1} \right)\left\|c \right\|_{N}^{-2}\zeta^2(t)
		\end{array}
	\end{equation}
	provided $-\lambda_{N+1}+q+\delta+\frac{1}{2\alpha}\lambda_{N+1}\leq 0$. Differentiating $p_e\left|e^{N-N_0}(t)\right|^2$ we have
	\begin{equation}\label{eq:LastModes}
		\begin{array}{lll}
			&\frac{d}{dt}\left[p_e\left|e^{N-N_0}(t) \right|^2 \right]+2\delta p_e\left|e^{N-N_0}(t) \right|^2\\
			&\hspace{5mm}= 2p_e\left(e^{N-N_0}(t) \right)^T\left(A_1+\delta I \right)e^{N-N_0}(t).
		\end{array}
	\end{equation}
	Let $\eta(t) = \text{col}\left\{X_0(t),\zeta(t),e^{N-N_0}(t) \right\}$. From \eqref{eq:PointActStabAnalysisNonDelayed_NeummanBoundary}-\eqref{eq:LastModes}
	\begin{equation}\label{eq:PointActStabResultNonDelayed}
		\begin{aligned}
			&\dot{V}+2\delta V \leq \eta^T(t)\Psi \eta(t)\leq 0
		\end{aligned}
	\end{equation}
	if
	\begin{equation}\label{eq:PointActLMIsNonDelayed}
		\begin{aligned}
			&\Psi=\scriptsize\begin{bmatrix}\Omega_1 \ & \Omega_2\\
				* \ & 2p_e\left(A_1+\delta I \right) \end{bmatrix}<0,\ \Omega_2 = \scriptsize\begin{bmatrix} P_0\mathcal{L}_0C_1\\ 0 \end{bmatrix},\\
			&\Omega_1 = \scriptsize\begin{bmatrix}
				\Phi_{0} \ & P_0\mathcal{L}_0\\
				* \ & -2\left(\lambda_{N+1}-q-\delta-\frac{1}{2\alpha}\lambda_{N+1} \right)\left\|c \right\|_{N}^{-2}
			\end{bmatrix}.
		\end{aligned}
	\end{equation}
	{Note that $A_1+\delta I<0$ by \eqref{eq:N0}. Therefore, by Schur complement, $\Psi<0$ iff
		\begin{equation}\label{Omega1}
			\begin{array}{lll}
				&\Omega_1 - \frac{1}{2p_e}P_0\mathcal{L}_0C_1\left(A_1+\delta I \right)^{-1}C_1^T\mathcal{L}_0^TP_0<0
			\end{array}
		\end{equation}
		Taking  $p_e\to \infty$ in \eqref{Omega1} ($p_e$  does not appear in $\Omega_1$), we find that $\Psi<0$ iff $\Omega_1<0$ and the latter is equivalent, by Schur complement, to \eqref{eq:ReducedLMIs}.
		{\color{blue} Thus, \eqref{eq:ReducedLMIs} guarantees \eqref{eq:PointActStabResultNonDelayed} implying the exponential stability of the closed-loop system \eqref{eNN0}-\eqref{eq:ztail} and \eqref{eq:L2Stability}.
			
			To prove the feasibility of \eqref{eq:ReducedLMIs} for large $N$}, choose $\alpha = 1 $ and $N_1\in \mathbb{N}$ such that for $N\geq N_1$, we have  $\Phi_0<0$ in \eqref{eq:ReducedLMIs} for some $P_0>0$. This is possible since $\left\|\mathcal{K}_0 \right\|$ is independent of $N$ and $F_0$ is Hurwitz (see \eqref{eq:GainsDesignL}, \eqref{eq:GainsDesignK} and \eqref{eq:ErrDefNonDelayed00}). By increasing $N_1$ we can also assume that for $N\geq N_1$ we have $\frac{1}{2}\lambda_{N+1}-q-\delta>0$. Then, by Schur complement $\Omega_1<0$ holds iff
		\begin{equation}\label{eq:LMIFin}
			\Phi_0+\frac{\left\|c\right\|_N^2}{\lambda_{N+1}-2q-2\delta}P_0\mathcal{L}_0\mathcal{L}_0^TP_0<0.
		\end{equation}
		Since $\left\|\mathcal{L}_0\right\|$ is independent of $N$, $\lambda_{N+1}\overset{N\to \infty}{\longrightarrow}\infty$ and $\left\|c\right\|_N^2\overset{N\to \infty}{\longrightarrow}0$, by increasing $N_1$ if needed, \eqref{eq:LMIFin} holds. Finally, note that by replacing $N$ with $N+1$ in \eqref{eq:LMIFin}, the positive terms on the left-hand side decrease, whereas $P_0F_0+F_0^TP_0+2\delta P_0$ is unchanged. This shows that feasibility for $N$ implies feasibility for $N+1$.}	
\end{proof}
\begin{remark} \label{rem_other PDEs}
	\textcolor{blue}{
		The reduced-order LMIs can be derived similarly 
		for other parabolic PDEs (including heat equations with variable diffusion and reaction coefficients as  in \cite{RamiContructiveFiniteDim} and Kuramoto-Sivashinsky equation (KSE) as in \cite{Rami_CDC20}):
		for
		the reduced-order closed-loop system (without $\hat z^{N-N_0}$)  the Lyapunov function of the form 
		$V(t)=V_0(t)+p_e|e^{N-N_0}(t)|^2$
		should be employed, where $p_e>0$ is large and $V_0$ corresponds to the reduced-order closed-loop system with the omitted $e^{N-N_0}$. 
		Then for $p_e\to \infty$ the reduced-order LMI 
		will be obtained.
		Moreover, 
		it can be shown that for the mentioned above PDEs 
		the similar controller under Neumann actuation and non-local measurement  leads to $L^2$ convergence without dynamic extension.
		This allows treating  fast-varying input/output delays as presented in Section \ref{sec_delay}. 
	}
	%
	%
\end{remark}

\section{Delayed $L^2$-stabilization}\label{sec_delay}
We consider the delayed reaction-diffusion system
\begin{equation}\label{eq:PDEPointActIntervalDelayed}
	\begin{aligned}
		& z_t(x,t)=z_{xx}(x,t)+qz(x,t),\\
		& z_x(0,t)=0, \quad z_x(1,t)=u(t-\tau_u(t)),
	\end{aligned}
\end{equation}
under delayed Neumann actuation and delayed non-local measurement
\begin{equation}\label{eq:BoundMeas}
	y(t) = \left<z(\cdot,t-\tau_y(t)) ,c\right>, \quad c\in L^2(0,1).
\end{equation}
\textcolor{blue}{Here $z(\cdot,t-\tau_y(t))=z(\cdot,0)$ for $t-\tau_y(t)\leq 0$ and $\tau_y(t)\geq 0$ is a \emph{known} continuously differentiable output delay with locally Lipschitz derivative from the interval}
\begin{equation}\label{eq:Delay bounds}
	0<\tau_m\leq \tau_y(t)\leq \tau_M.
\end{equation}
The lower bound on $\tau_y(t)$ is required for well-posedness only.
The continuously differentiable input delay $\tau_u(t)$ belongs to the known interval
\begin{equation}\label{eq:tauUdecomp}
	\tau_u(t)\in [r, r+\theta_M], \quad t\geq 0
\end{equation}
where $r>0$ and has locally Lipschitz derivative. 
Henceforth the dependence of $\tau_y(t)$ and $\tau_u(t)$ 
on $t$ will be suppressed to shorten notations.

We present the solution of \eqref{eq:PDEPointActIntervalDelayed} as \eqref{eq:zSolDecomp}. {\color{blue} Then \eqref{eq:ZOdesPointActNonDelayed} has the form}
\begin{equation}\label{eq:ZOdesPointActIntervalDelayed}
	\begin{aligned}
		&\dot{z}_n(t) = (- \lambda_n + q) z_n(t) + b_n u(t-\tau_u) \\
		&b_0 = 1,\ \ b_n = {(-1)}^n \sqrt{2}, \ \ n=0,1,\dots.
	\end{aligned}
\end{equation}
Let $\delta>0$. There exists some $N_0 \in \mathbb{Z}_+$ such that \eqref{eq:N0} holds. $N_0$ will define the dimension of the controller, whereas $N\ge N_0+1$ will be the dimension of the observer. To derive stability conditions in terms of the reduced-order LMIs, in Sections \ref{sec_robust} and \ref{sec_predictor} we consider the case of known input delay and construct a $N$-dimensional observer of the form \eqref{eq:ZhatSeries0}, where $\hat{z}_n(t)$ satisfy the ODEs
\begin{equation}\label{eq:obsODEDelayednew}
	\begin{aligned}
		\dot{\hat{z}}_n(t) &= (-\lambda_n+q)\hat{z}_n(t) + b_nu(t-\tau_u)\\
		&-l_n\left[\left<\hat{z}(\cdot,t-\tau_y) ,c\right>- y(t)\right],\quad t\ge 0,\\
		\hat{z}_n(t)&=0,\quad t\le 0, \qquad 0\leq n\leq N.
	\end{aligned}
\end{equation}
Here $l_n \ (0\leq n\leq N)$ are scalars and $l_{N_0+1}=...=l_N=0$.
In Section \ref{Sec:PredictorKnownDelay} we consider unknown $\tau_u$, where $u(t-\tau_u)$ in the observer equation \eqref{eq:obsODEDelayednew} is replaced by $u(t-r)$.

Recall the notations \eqref{eq:C0A0}. Under the assumption \eqref{eq:AsscnNonDelayed}, $(A_0,C_0)$ is observable. Let $L_0 = [l_0,\dots, l_{N_0}]^T$ satisfy the Lyapunov inequality \eqref{eq:GainsDesignL} for some $0<P_{\text{o}}\in \mathbb{R}^{(N_0+1)\times (N_0+1)}$.  Similarly, \eqref{eq:bnNeq0} implies that $(A_0,B_0)$ is controllable. Let $K_0\in \mathbb{R}^{1\times (N_0+1)}$ satisfy \eqref{eq:GainsDesignK}
for some $0<P_{\text{c}}\in \mathbb{R}^{(N_0+1)\times (N_0+1)}$.

\subsection{Stabilization robust with respect to delays}\label{sec_robust}

We propose the control law \eqref{eq:ContDef},
	which is based on the $N$-dimensional observer \eqref{eq:ZhatSeries0}, \eqref{eq:obsODEDelayednew}.
	
	\textcolor{blue}{We show well-posedness of the closed-loop system \eqref{eq:PDEPointActIntervalDelayed}, \eqref{eq:obsODEDelayednew} subject to the measurement \eqref{eq:BoundMeas} and control input \eqref{eq:ContDef}. Note that well-posedness of the closed-loop systems in Sections 3.2 and 3.3 can be proved similarly and it is omitted for brevity. We assume that there exist \emph{unique} $t_y^*\in [\tau_m,\tau_M]$ and $t_u^*\in [r,r+\theta_M]$ such that $t_y^*-\tau_y(t_y^*)=t_u^*-\tau_u(t_u^*)=0$ (see Figure \ref{fig:line}). Recall \eqref{eq:CalADef} and let $z(\cdot,0)\in \mathcal{D}\left(\mathfrak{A} \right)$. We use the step method for well-posedness.}
	\begin{figure}
		\centering
		\includegraphics[width=85mm,scale=1]{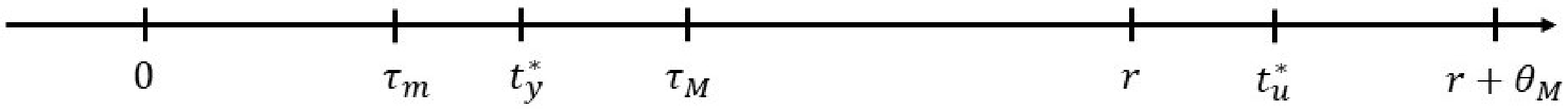}
		\caption{\textcolor{blue}{Well-posedness: time instances for the step method}
	}\label{fig:line}
\end{figure}

\textcolor{blue}{In the first step, consider $t\in [0,t_u^*]$. By \eqref{eq:ContDef} and \eqref{eq:obsODEDelayednew} we have $u(t-\tau_u(t))\equiv 0, \ t\in [0,t_u^*]$. Since $z(\cdot,0)\in \mathcal{D}\left(\mathfrak{A} \right)$, by Theorem 4.1.3 in \cite{pazy1983semigroups}, the PDE \eqref{eq:PDEPointActIntervalDelayed} has a unique  solution in $C^1([0,t_u^*],L^2(0,1))$. Next, consider the ODEs \eqref{eq:obsODEDelayednew}. First, let $t\in [0,t_y^*]$. By assumption we have $y(t) \equiv \left<z(\cdot,0),c \right>$ and $\hat{z}(\cdot,t-\tau_y(t))\equiv 0$. Hence, \eqref{eq:obsODEDelayednew} have a unique solution that is continuously differentiable with Lipschitz derivative on $[0,t_y^*]$. Next, let $t\in [t_y^*,t_y^*+\tau_m]$ and consider \eqref{eq:obsODEDelayednew} with initial condition $\left\{ \hat{z}_n(t_y^*)\right\}_{n=0}^N$ obtained at the previous step. Note that by assumption we have $0\leq t-\tau_y(t)\leq t_y^*, \ \forall t\in [t_y^*,t_y^*+\tau_m]$. By the previous results, the last two terms in the ODEs \eqref{eq:obsODEDelayednew} (thought of as non-homogeneous terms) are Lipschitz continuous on $t\in [t_y^*,t_y^*+\tau_m]$. Hence, there exists a unique solution that is continuously differentiable with Lipschitz derivative for $t\in [t_y^*,t_y^*+\tau_m]$. Gluing the solutions together we have a unique solution that is continuously differentiable with Lipschitz derivative for $t\in [0,t_y^*+\tau_m]$. Repeating the same arguments step by step on $[t_y^*+\tau_m,t_y^*+2\tau_m],[t_y^*+2\tau_m*,t_y^*+3\tau_m],\dots$ until $t=\tau_u^*$, we conclude that  \eqref{eq:obsODEDelayednew} has a unique solution that is continuously differentiable with Lipschitz derivative for $t\in [0,t_u^*]$.}

\textcolor{blue}{In the second step, consider $t\in[t_u^*,t_u^*+\tau_m]$. Here, the control input is no longer identically zero.  We have $t-\tau_y(t)\in [0,t_u^*]$ and $t-\tau_u(t)\in [0,t_u^*)$ for $ t\in[t_u^*,t_u^*+\tau_m]$. Consider first \eqref{eq:PDEPointActIntervalDelayed} with initial condition $z(\cdot,t_u^*)\in \mathcal{D}\left(\mathfrak{A} \right)$ obtained in the previous step. Introducing $w(x,t) = z(x,t)-\frac{1}{2}x^2u(t-\tau_u(t))$ we have the following equivalent PDE
	\begin{equation*}
		\begin{array}{lll}
			& w_t(x,t)=w_{xx}(x,t)+qw(x,t)\\
			&+\left(\frac{qx^2}{2}+1\right)u(t-\tau_u(t))-\frac{x^2}{2}\dot{u}(t-\tau_u(t))(1-\dot{\tau}_u(t)),\\
			& w_x(0,t)=0, \quad w_x(1,t)=0.
		\end{array}
	\end{equation*}
	with initial condition $w(\cdot,t_u^*)=z(\cdot,t_u^*)$. By results of the previous step, $\eqref{eq:ContDef}$ and the assumption on $\tau_u(t)$, the last two terms on the right-hand-side are Lipschitz continuous  non homogeneities on $t\in[t_u^*,t_u^*+\tau_m]$.  By Theorems  6.3.1 and 6.3.3 in \cite{pazy1983semigroups} (see similar arguments in  (2.15)-(2.18)) 
	we obtain a unique classical solution such that $z\in C([t_u^*,t_u^*+\tau_m],L^2(0,1))\cap C^1((t_u^*,t_u^*+\tau_m],L^2(0,1))$ and $z(\cdot,t)\in H^2(0,1)$ with $z_x(0,t)=0, \ z_x(1,t)=u(t-\tau_u(t))$ for $t\in[t_u^*,t_u^*+\tau_m]$. Next, consider \eqref{eq:obsODEDelayednew} for $t\in [t_u^*,t_u^*+\tau_m]$ with initial condition $\left\{ \hat{z}_n(t_u^*)\right\}_{n=0}^N$ obtained at the previous step. Since $t-\tau_y(t)\in [0,t_u^*]$ and $t-\tau_u(t)\in [0,t_u^*)$ for $ t\in[t_u^*,t_u^*+\tau_m]$, the three last terms in the ODEs are Lipschitz continuous on $t\in[t_u^*,t_u^*+\tau_m]$. Hence, \eqref{eq:obsODEDelayednew} has a unique solution that is continuously differentiable with Lipschitz derivative on $t\in[t_u^*,t_u^*+\tau_m]$. Continuing step-by-step on $[t_u^*+\tau_m*,t_u^*+2\tau_m],[t_u^*+2\tau_m*,t_u^*+3\tau_m],\dots$ we obtain the existence of a unique classical solution $z\in C([0,\infty),L^2(0,1))\cap C^1((0,\infty)\setminus S,L^2(0,1))$, where $S=\left\{\tau_u^*+j\tau_m\right\}_{j=0}^{\infty}$. Moreover, $z(\cdot,t)\in H^2(0,1)$ with $z_x(0,t)=0, \ z_x(1,t)=u(t-\tau_u(t))$ for $t\in[0,\infty)$.}

Recall the estimation error given in \eqref{eq:ErrDefNonDelayed0}. The last term on the right-hand side of \eqref{eq:obsODEDelayednew} can be written as
\begin{equation}\label{eq:IntroZetaDelayed}
	\begin{array}{ll}
		\left<\hat{z}(\cdot,t-\tau_y) ,c\right>- y(t) =-\sum_{n=0}^{N} c_ne_n(t-\tau_y)-\zeta(t-\tau_y)
	\end{array}
\end{equation}
with $\zeta(t)$ given in \eqref{eq:IntroZetaNonDelayed} and satisfies \eqref{eq:ZetaEstBoundartAct}. Then the error equations for $t\geq 0$ and $0\leq n\leq N_0$ are
\begin{equation}\label{eq:en}
	\begin{array}{ll}
		&\dot e_n(t)=(-\lambda_n+q)e_n(t)\\
		&\hspace{5mm}-l_n\left(\sum_{n=1}^{N} c_ne_n(t-\tau_y)+\zeta(t-\tau_y)\right),\\
		&e_n(t)=\left<z_0,\phi_n \right>, \quad t\leq 0.
	\end{array}
\end{equation}
Recall the notations \eqref{eq:C0A0}, \eqref{eq:ErrDefNonDelayed0} and \eqref{eq:ErrDefNonDelayed00} and let
\begin{equation}\label{eq:notationsDelay}
	\begin{array}{lll}
		&\mathcal{B}_0= \text{col}\left\{B_0,0_{(N_0+1)\times 1}\right\}, \ \mathcal{C}_0 = [0_{1\times(N_0+1)},C_0],\\
		&\Upsilon_{y}(t) = X_0(t-\tau_y)-X_0(t),\ \Upsilon_r(t) = X_0(t-r)-X_0(t),\\
		&\Upsilon_u(t) = X_0(t-\tau_u)-X_0(t-r).
	\end{array}
\end{equation}
As in the non-delayed case, here $e^{N-N_0}(t)=e^{A_1t}e(0)$ satisfies  \eqref{eNN0}. Substituting $e^{N-N_0}(t-\tau_y)=e^{-A_1\tau_y}e^{N-N_0}(t)$,
the reduced-order (i.e decoupled from $\hat{z}^{N-N_0}(t)$) closed-loop system  is governed by
\begin{equation}\label{eq:X0Delay}
	\begin{array}{llllll}
		\dot{X}_0(t) = & F_0X_0(t)+\mathcal{B}_0\mathcal{K}_0\left [\Upsilon_u(t)+
		\Upsilon_r(t)\right]+\mathcal{L}_0\mathcal{C}_0\Upsilon_y(t)\\
		& 
		+\mathcal{L}_0\zeta(t-\tau_y)+\mathcal{L}_0C_1e^{-A_1\tau_y}e^{N-N_0}(t),\\
		\dot{z}_n(t)=& (-\lambda_n+q)z_n(t)+b_n\mathcal{K}_0X_0(t)\\
		&+b_n\mathcal{K}_0\left [\Upsilon_u(t)+
		\Upsilon_r(t)\right], \quad n>N,
	\end{array}
\end{equation}
with $\zeta(t)$ subject to \eqref{eq:ZetaEstBoundartAct},
where $e^{N-N_0}(t)$ is an exponentially decaying input.
Note that $\hat{z}^{N-N_0}(t)$  satisfies
\begin{equation}\label{eq:ztailDelay}
	\dot {\hat{z}}^{N-N_0}(t)= A_1 \hat{z}^{N-N_0}(t)+B_1\mathcal{K}_0X_0(t-\tau_u)
\end{equation}
and is exponentially decaying provided $X_0(t)$ is exponentially decaying. For $L^2$-stability analysis of \eqref{eq:X0Delay}, \eqref{eNN0} we fix $\delta_0>\delta$ and define the Lyapunov functional
\begin{equation}\label{eq:VComponents0}
	W(t):=V(t)+\sum_{i=0}^2 V_{S_i}(t)+\sum_{i=0}^2 V_{R_i}(t),
\end{equation}
where $V(t)$ is given by \eqref{eq:PointActVNonDelayed_L2} and
\begin{equation}\label{eq:VComponentsR}
	\begin{array}{llll}
		&\hspace{-3mm}V_{S_0}(t):=\int_{t-r}^{t} e^{-2\delta_0(t-s)}\left|\mathcal{K}_0X_0(s)\right|^2_{S_0} ds,\\
		&\hspace{-3mm}V_{R_0}(t):= r\int_{-r}^{0} \int_{t+\theta}^t e^{-2\delta_0(t-s)} \left|\mathcal{K}_0\dot{X}_0(s)\right|^2_{R_0}ds d\theta \\
		&\hspace{-3mm}V_{S_1}(t):=\int_{t-r-\theta_M}^{t-r} e^{-2\delta_0(t-s)}\left|\mathcal{K}_0X_0(s)\right|^2_{S_1} ds,\\
		&\hspace{-3mm}V_{R_1}(t):= \theta_M\int_{-r-\theta_M}^{-r
		} \int_{t+\theta}^t e^{-2\delta_0(t-s)} \left|\mathcal{K}_0 \dot{X}_0(s)\right|^2_{R_1}ds d\theta \\
		&\hspace{-3mm}V_{S_2}(t):=\int_{t-\tau_M}^t e^{-2\delta_0(t-s)}\left|X_0(s)\right|^2_{S_2}ds,\\
		&\hspace{-3mm}V_{R_2}(t):= \tau_M\int_{-\tau_M}^0 \int_{t+\theta}^t e^{-2\delta_0(t-s)} \left|\dot{X}_0(s)\right|^2_{R_2}ds d\theta. \\
	\end{array}
\end{equation}
Here $S_2,R_2>0$ are square matrices of order $2N_0+2$  and $S_0, R_0, S_1,R_1>0$ are scalars. $V_{S_0}$ and $V_{R_0}$ are introduced to compensate $\Upsilon_r(t)$. $V_{S_1}$ and $V_{R_1}$ are used to compensate $\Upsilon_{u}(t)$. $V_{S_2}$ and $V_{R_2}$ are used to compensate $\Upsilon_y(t)$. Finally, to compensate $\zeta(t-\tau_y)$  we will use Halanay's inequality:
\begin{lemma}\label{Lem:Halanay}
	(Halanay's inequality).
	\newline
	Let $0<\delta_1<\delta_0$ and let $W:[t_0-\tau_M,\infty)\longrightarrow [0,\infty) $ be an absolutely continuous function
	that satisfies
	\begin{equation*}
		\dot{W}(t) + 2 \delta_0 W (t) - 2 \delta_1 \sup_{-\tau_M\leq \theta \leq 0} W(t+\theta) \leq 0, \quad t \geq t_0.
	\end{equation*}
	Then $
	W(t) \leq \exp \big(-2\delta_{\tau_M}(t-t_0) \big) \sup_{-\tau_M\leq \theta \leq 0} W(t_0+\theta), \ t \geq t_0,$ 
where $ \delta_{\tau_M}>0 $ is a unique positive solution of
\begin{equation}\label{eq:DecRate}
	\delta_{\tau_M}=\delta_0-\delta_1 \exp(2 \delta_{\tau_M} \tau_M).
\end{equation}
\end{lemma}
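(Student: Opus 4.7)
My approach is the classical Halanay-style comparison with an exponentially decaying envelope.

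I would first verify that \eqref{eq:DecRate} admits a unique solution $\delta_{\tau_M}\in(0,\delta_0)$. Setting $f(x):=x-\delta_0+\delta_1 e^{2x\tau_M}$, one has $f(0)=\delta_1-\delta_0<0$, $f(\delta_0)=\delta_1 e^{2\delta_0\tau_M}>0$, and $f'(x)=1+2\delta_1\tau_M e^{2x\tau_M}>0$, so the intermediate value theorem and strict monotonicity deliver a unique positive root.

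Writing $\bar W:=\sup_{-\tau_M\leq\theta\leq 0}W(t_0+\theta)$, the goal is $W(t)\leq \bar W\, e^{-2\delta_{\tau_M}(t-t_0)}$ for all $t\geq t_0$. I would introduce the perturbed envelope $V_\epsilon(t):=(\bar W+\epsilon)e^{-2\delta_{\tau_M}(t-t_0)}$ for arbitrary $\epsilon>0$, prove $W(t)<V_\epsilon(t)$ on $[t_0,\infty)$, and then let $\epsilon\to 0^+$. Suppose the strict bound failed and set $t_1:=\inf\{t>t_0:W(t)\geq V_\epsilon(t)\}$; since $W(t_0)\leq\bar W<V_\epsilon(t_0)$, continuity forces $t_1>t_0$, $W(t_1)=V_\epsilon(t_1)$, and $W<V_\epsilon$ on $[t_0,t_1)$. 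On $[t_1-\tau_M,t_1]$ the bound $W(s)\leq V_\epsilon(s)$ holds in both regimes $s\leq t_0$ (via $W(s)\leq\bar W<V_\epsilon(t_0)\leq V_\epsilon(s)$, using that $V_\epsilon$ is decreasing) and $s\in[t_0,t_1]$; combining with the monotonicity of $V_\epsilon$ yields
\begin{equation*}
\sup_{-\tau_M\leq\theta\leq 0}W(t_1+\theta)\leq V_\epsilon(t_1-\tau_M)=V_\epsilon(t_1)\,e^{2\delta_{\tau_M}\tau_M}.
\end{equation*}
Substituting into the hypothesis and invoking \eqref{eq:DecRate} gives
\begin{equation*}
\dot W(t_1)\leq -2\delta_0 V_\epsilon(t_1)+2\delta_1 V_\epsilon(t_1)e^{2\delta_{\tau_M}\tau_M}=-2\delta_{\tau_M}V_\epsilon(t_1)=\dot V_\epsilon(t_1),
\end{equation*}
whereas the fact that $t_1$ is a first crossing forces the upper right Dini derivative to satisfy $D^+(W-V_\epsilon)(t_1)\geq 0$; the two facts are incompatible once the $\epsilon$-gap is exploited, producing the contradiction.

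The main technical obstacle will be reconciling the pointwise derivative comparison at $t_1$ with the fact that the hypothesis $\dot W+2\delta_0 W\leq 2\delta_1\sup W$ holds only almost everywhere, since $W$ is merely absolutely continuous. The cleanest remedy I foresee is to recast the whole argument in the integrated form $W(t)-W(t_0)\leq\int_{t_0}^t[-2\delta_0 W(s)+2\delta_1\sup_{[s-\tau_M,s]}W]\,ds$, which holds for every $t\geq t_0$ with no a.e.\ caveats, and to run the comparison directly on $h(t):=W(t)-V_\epsilon(t)$ to rule out $h\geq 0$ on any interval; letting $\epsilon\downarrow 0$ then yields the stated exponential bound.
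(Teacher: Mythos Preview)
The paper does not prove Lemma~\ref{Lem:Halanay}; it is stated as a classical result (Halanay's inequality) and used as a black box in the subsequent stability analysis. There is therefore no ``paper's own proof'' to compare against.

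Your proposal is the standard comparison argument and is essentially correct. The existence/uniqueness of the root $\delta_{\tau_M}$ via monotonicity of $f$, the $\epsilon$-perturbed envelope $V_\epsilon$, the first-crossing contradiction, and the passage $\epsilon\to 0^+$ are exactly the classical ingredients. You are also right to flag the absolute-continuity issue: the differential inequality holds only a.e., so the pointwise comparison $\dot W(t_1)\leq \dot V_\epsilon(t_1)$ at a specific $t_1$ is not directly available. Your proposed remedy---passing to the integrated inequality $W(t)\leq W(t_0)+\int_{t_0}^t\big[-2\delta_0 W(s)+2\delta_1\sup_{[s-\tau_M,s]}W\big]\,ds$ and comparing $h=W-V_\epsilon$ in integral form---is the clean way to close this gap and would make the argument rigorous. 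Nothing further is needed.
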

\textcolor{blue}{To state the main result of this section, we employ the following notations for $G_1\in \mathbb{R}$ and $G_2\in \mathbb{R}^{2(N_0+1)\times2(N_0+1)}$ and $0<\alpha,\alpha_1,\alpha_2\in \mathbb{R}$:}
\begin{equation}\label{eq:PsiMatrix2}
	\begin{array}{lll}
		&\hspace{-3mm}\Psi_{0} = \scriptsize \left[
		\begin{array}{c|c}
			\Theta & \Sigma_1 \ \ \Sigma_2  \\
			\hline
			* & \operatorname{diag}\left\{\Gamma_1, \Gamma_2 \right\}
		\end{array}
		\right], \ \Theta = \scriptsize\begin{bmatrix}\Phi_{\text{delay}} & P_0\mathcal{L}_0\\
			* & -2\delta_1\left\|c \right\|_N^{-2} \end{bmatrix},\\
		&\hspace{-3mm}\Sigma_1 =\scriptsize\begin{bmatrix}
			P_0\mathcal{L}_0\mathcal{C}_0-2\delta_1P_0-\varepsilon_MS_2& -\varepsilon_MS_2\\0 & 0
		\end{bmatrix},\\
		&\hspace{-3mm}\Sigma_2 = \scriptsize \begin{bmatrix}
			P_0\mathcal{B}_0-\varepsilon_{r,M}\mathcal{K}_0^TS_1 \ & \Xi_1  \ & -\varepsilon_{r,M}\mathcal{K}_0^TS_1\\ 0 \ & 0 \ & 0
		\end{bmatrix},\\
		&\hspace{-3mm}\Gamma_1 = \scriptsize\begin{bmatrix}
			-2\delta_1P_0-\varepsilon_M(R_2+S_2) \ & -\varepsilon_M(S_2+G_2)\\* \ & -\varepsilon_M(R_2+S_2)
		\end{bmatrix},\\
		&\hspace{-3mm}\Gamma_2 = \scriptsize\begin{bmatrix}
			-\varepsilon_{r,M}(R_1+S_1)+\frac{2\alpha_1}{\pi^2N}\ & -\varepsilon_{r,M}S_1\ & -\varepsilon_{r,M}(S_1+G_1)\\
			* \ & \Xi_2 \ &-\varepsilon_{r,M}S_1\\
			* \ & * \ & -\varepsilon_{r,M}(R_1+S_1)
		\end{bmatrix},\\
		&\hspace{-3mm}\Phi_{\text{delay}} = \Phi_{0}+(1-\varepsilon_r)\mathcal{K}_0^TS_0\mathcal{K}_0,\\
		&\hspace{7mm}+(\varepsilon_r-\varepsilon_{r,M})\mathcal{K}_0^TS_1\mathcal{K}_0+ (1-\varepsilon_M)S_2,
	\end{array}
\end{equation}
\begin{equation*}
	\begin{array}{lll}	
		&\hspace{-3mm}\Xi_1 = P_0\mathcal{B}_0-\varepsilon_{r}\mathcal{K}_0^TS_0+(\varepsilon_r - \varepsilon_{r,M})\mathcal{K}_0^TS_1,	\\	
		&\hspace{-3mm}\Xi_2 = \frac{2\alpha_2}{\pi^2N}-\varepsilon_r(R_0+S_0)+(\varepsilon_r - \varepsilon_{r,M})S_1,\\
		&\hspace{-3mm} \Lambda_{0} = [F_0, \mathcal{L}_0, \mathcal{L}_0\mathcal{C}_0, 0 ,\mathcal{B}_0,\mathcal{B}_0,0],\\
		&\hspace{-3mm} \varepsilon_{\tau} = e^{-2\delta_0 \tau}, \quad \tau\in \left\{r,\tau_M,r+\theta_M\right\}.
	\end{array}
\end{equation*}
\begin{theorem}\label{Thm:NonLocalL2Delay}
	\textcolor{blue}{Consider \eqref{eq:PDEPointActIntervalDelayed}, measurement \eqref{eq:BoundMeas} with $c\in L^2(0,1)$ satisfying \eqref{eq:AsscnNonDelayed}, control law \eqref{eq:ContDef}. Let $\delta_0>\delta>0$ and $\delta_1 = \delta_0-\delta$. Let $N_0\in \mathbb{Z}_+$ satisfy \eqref{eq:N0} and $N\geq N_0+1$. Assume that $L_0$ and $K_0$ are obtained using \eqref{eq:GainsDesignL}  and \eqref{eq:GainsDesignK}, respectively. Given $r,\theta_M,\tau_M>0$, let there exist positive definite matrices $P_0,S_2,R_2\in \mathbb{R}^{2(N_0+1)\times 2(N_0+1)}$, scalars $S_0,R_0,S_1,R_1, \alpha,\alpha_1,\alpha_2>0$, $G_1\in \mathbb{R}$ and $G_2\in \mathbb{R}^{2(N_0+1)\times 2(N_0+1)}$ such that
		\begin{equation}\label{eq:3.1LMI}
			\begin{array}{lll}
				&\scriptsize\begin{bmatrix}R_1 & G_1\\ * & R_1 \end{bmatrix}\geq 0, \ \ \scriptsize\begin{bmatrix}R_2 & G_2\\ * & R_2 \end{bmatrix}\geq 0,\\
				&\scriptsize\left[
				\begin{array}{c|c}
					-\lambda_{N+1}+q+\delta_0 & 1 \qquad  1 \qquad 1 \\
					\hline
					* & -\frac{2}{\lambda_{N+1}}\operatorname{diag}\left\{\alpha, \alpha_1, \alpha_2 \right\}
				\end{array}
				\right]< 0
			\end{array}
		\end{equation}
		and
		\begin{equation}\label{eq:PsiLMI}
			\Psi_{0} +\Lambda_{0}^T \left[\mathcal{K}_0^T\left(r^2R_0+\theta_M^2R_1\right)\mathcal{K}_0+\tau_M^2R_2 \right]\Lambda_{0}\normalsize<0
		\end{equation}
		hold. Then the solution $z(x,t)$ to \eqref{eq:PDEPointActIntervalDelayed} under the control law \eqref{eq:ContDef} and the 
		observer $\hat z(x,t)$ defined by \eqref{eq:ZhatSeries0}, \eqref{eq:obsODEDelayednew}
		satisfy 
		\begin{equation}\label{eq:L2StabilityDelay}
			\begin{aligned}
				\left\|z(\cdot,t)\right\|
				+\left\|z(\cdot,t)-\hat{z}(\cdot,t)\right\|\leq Me^{-\delta_{\tau_M} t}\left\|z(\cdot,0)\right\|
			\end{aligned}
		\end{equation}
		for some $M\geq 1$, where  $\delta_{\tau_M}>0$ is defined by \eqref{eq:DecRate}.  Moreover, LMIs \eqref{eq:3.1LMI}, \eqref{eq:PsiLMI} are always feasible for large enough $N$ and  small enough $\tau_M,\theta_M$ and $r$ and their feasibility for $N$ implies feasibility for $N+1$.}
\end{theorem}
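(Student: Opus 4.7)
The plan is to mimic the proof of Theorem 2.1 but augmented with a delay-dependent Lyapunov-Krasovskii analysis and a final Halanay-type argument to absorb the delayed tail $\zeta(t-\tau_y)$. First I would differentiate $W(t)$ from \eqref{eq:VComponents0}--\eqref{eq:VComponentsR} along the reduced-order closed-loop system \eqref{eq:X0Delay}, \eqref{eNN0}. The derivative of $V(t)$ contributes $X_0^T[P_0F_0+F_0^TP_0+2\delta_0 P_0]X_0$ together with cross terms $2X_0^TP_0\mathcal{L}_0\mathcal{C}_0\Upsilon_y$, $2X_0^TP_0\mathcal{B}_0\mathcal{K}_0(\Upsilon_u+\Upsilon_r)$, $2X_0^TP_0\mathcal{L}_0\zeta(t-\tau_y)$, and $2X_0^TP_0\mathcal{L}_0C_1 e^{-A_1\tau_y}e^{N-N_0}$, plus the tail contribution $2\sum_{n>N}(-\lambda_n+q+\delta_0)z_n^2 + 2\sum_{n>N}z_nb_n\mathcal{K}_0[X_0+\Upsilon_u+\Upsilon_r]$; Young's inequality with parameters $\alpha,\alpha_1,\alpha_2$ combined with \eqref{eq:IntegTest} produces the $\frac{2\alpha}{\pi^2 N}\mathcal{K}_0^T\mathcal{K}_0$ and $\frac{2\alpha_{1,2}}{\pi^2 N}$ terms on the diagonal of $\Psi_0$, while the residual $\sum_{n>N}\lambda_n z_n^2$ is dominated by $\zeta^2$ via \eqref{eq:ZetaEstBoundartAct}, provided the auxiliary inequality \eqref{eq:3.1LMI} holds (this is where Schur complement first enters).

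Next I would handle the $V_{S_i}$ and $V_{R_i}$ derivatives. Each $\dot V_{S_i}+2\delta_0 V_{S_i}$ gives boundary difference terms in $\mathcal{K}_0X_0(t)$, $\mathcal{K}_0X_0(t-r)$, $\mathcal{K}_0X_0(t-r-\theta_M)$, $X_0(t)$, $X_0(t-\tau_M)$ weighted by $\varepsilon_r,\varepsilon_{r,M},\varepsilon_M$. Each $\dot V_{R_i}+2\delta_0 V_{R_i}$ produces $r^2|\mathcal{K}_0\dot X_0|^2_{R_0}+\theta_M^2|\mathcal{K}_0\dot X_0|^2_{R_1}+\tau_M^2|\dot X_0|^2_{R_2}$ minus integral quadratic terms $-r e^{-2\delta_0 r}\!\int_{t-r}^t|\mathcal{K}_0\dot X_0|^2_{R_0}$, etc. These integrals are bounded below by Jensen's inequality (for the $R_0$ piece) and by Park's reciprocally convex inequality (for $R_1,R_2$, where $G_1,G_2$ serve as the convexity matrices, giving the constraint \eqref{eq:3.1LMI} first line) applied respectively to the interval splits $[t-r-\theta_M,t-\tau_u]\cup[t-\tau_u,t-r]$ and $[t-\tau_M,t-\tau_y]\cup[t-\tau_y,t]$. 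Collecting everything in the augmented vector $\eta=\mathrm{col}\{X_0,\zeta(t-\tau_y),X_0(t-\tau_y),X_0(t-\tau_M),\mathcal{K}_0X_0(t-r),\mathcal{K}_0X_0(t-\tau_u),\mathcal{K}_0X_0(t-r-\theta_M)\}$ and substituting $\dot X_0=\Lambda_0\eta+\mathcal{L}_0 C_1 e^{-A_1\tau_y}e^{N-N_0}$ yields, after absorbing the quadratic-in-$\dot X_0$ term by the Schur complement embedded in $\Lambda_0^T[\cdots]\Lambda_0$, an inequality of the form $\dot W+2\delta_0 W-2\delta_1\|c\|_N^{-2}\zeta^2(t-\tau_y)\le \eta^T\Psi_0\eta+\text{(decaying }e^{N-N_0}\text{ term)}$. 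The $p_e\to\infty$ trick, exactly as in Theorem 2.1, eliminates the $e^{N-N_0}$ cross term and leaves \eqref{eq:PsiLMI}.

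Since $\|c\|_N^{-2}\zeta^2(t-\tau_y)\le W(t-\tau_y)\le\sup_{-\tau_M\le\theta\le 0}W(t+\theta)$, LMIs \eqref{eq:3.1LMI}, \eqref{eq:PsiLMI} imply $\dot W+2\delta_0 W-2\delta_1\sup_{-\tau_M\le\theta\le 0}W(t+\theta)\le 0$. Halanay's inequality (Lemma 3.1) then gives $W(t)\le e^{-2\delta_{\tau_M} t}\sup_{-\tau_M\le\theta\le 0}W(\theta)$, from which \eqref{eq:L2StabilityDelay} follows for $z$ and for $e^{N-N_0}$; $\hat z^{N-N_0}$ is handled by \eqref{eq:ztailDelay} since $A_1$ is Hurwitz with margin and $X_0$ is exponentially decaying.

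Finally, for the feasibility claim I would choose $S_0=S_1=R_0=R_1$, $S_2=R_2=P_0$, $G_1=0$, $G_2=0$, $\alpha=\alpha_1=\alpha_2=1$ and let $r,\theta_M,\tau_M\to 0^+$: then $\varepsilon_r,\varepsilon_{r,M},\varepsilon_M\to 1$, the off-diagonal blocks in $\Gamma_1,\Gamma_2$ and $\Sigma_1,\Sigma_2$ involving $P_0\mathcal{B}_0,P_0\mathcal{L}_0\mathcal{C}_0$ shrink (after the extra quadratic $r^2R_0+\theta_M^2R_1+\tau_M^2R_2$ block is absorbed by Schur complement), so the leading $2\times 2$ block of $\Psi_0$ collapses to $\Theta$ with $\Phi_{\text{delay}}\to \Phi_0$ --- exactly the LMI of Theorem 2.1, which is feasible for large $N$. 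For \eqref{eq:3.1LMI}, Schur complement reduces it to $-\lambda_{N+1}+q+\delta_0+\tfrac{\lambda_{N+1}}{2}(\alpha^{-1}+\alpha_1^{-1}+\alpha_2^{-1})<0$, feasible for $N$ large once the $\alpha_i$ exceed $3/2$. The monotonicity in $N$ follows because $\|c\|_N^2\to 0$, $\lambda_N\to\infty$, so enlarging $N$ only improves every block, exactly as at the end of the proof of Theorem 2.1. The main obstacle I anticipate is the book-keeping of the reciprocally convex splits and correctly identifying which combinations of the $\varepsilon$-weighted $S_i,R_i,G_i$ terms cancel to produce the block structure of $\Psi_0$; everything else is a routine but lengthy extension of the non-delayed proof.
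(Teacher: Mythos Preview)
Your overall strategy matches the paper's: differentiate $W$ along \eqref{eq:X0Delay} and \eqref{eNN0}; apply Young with weights $\alpha,\alpha_1,\alpha_2$ to the tail cross-terms; use Jensen for $V_{R_0}$ and Park's reciprocally-convex lemma for $V_{R_1},V_{R_2}$; substitute $\dot X_0$ via $\Lambda_0$; eliminate $e^{N-N_0}$ by Schur complement with $p_e\to\infty$; and close with Halanay.

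There is, however, a genuine gap in how you invoke Halanay. You bound $-2\delta_1\sup_\theta W(t+\theta)$ only by $-2\delta_1\|c\|_N^{-2}\zeta^2(t-\tau_y)$. The paper (see \eqref{eq:HalanayDelay}) keeps \emph{all three} pieces of $V(t-\tau_y)$:
\[
-2\delta_1\sup_\theta W(t+\theta)\ \le\ -2\delta_1\Bigl[\,\bigl|X_0+\Upsilon_y\bigr|_{P_0}^2+\|c\|_N^{-2}\zeta^2(t-\tau_y)+p_e\bigl|e^{N-N_0}(t)\bigr|^2_{e^{-2A_1\tau_y}}\Bigr].
\]
The first piece is precisely what generates the $-2\delta_1P_0$ entries in $\Phi_{\text{delay}}$ (turning the $2\delta_0P_0$ coming from $\dot V+2\delta_0 V$ into the $2\delta P_0$ of $\Phi_0$), in $\Sigma_1$, and in the $(1,1)$-block of $\Gamma_1$. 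Without it you do not arrive at the stated $\Psi_0$, and your feasibility argument ``$\Phi_{\text{delay}}\to\Phi_0$ as $r,\theta_M,\tau_M\to 0$'' breaks, since the leading block would carry $2\delta_0P_0$, for which $F_0$ (designed only for decay margin $\delta$ via \eqref{eq:GainsDesignL}--\eqref{eq:GainsDesignK}) need not admit a Lyapunov matrix. The third piece is equally essential: it is what makes $\Gamma_3=2p_e\bigl(A_1+\delta_0I-\delta_1e^{-2A_1\tau_y}\bigr)$, and since \eqref{eq:N0} guarantees only $A_1+\delta I<0$ (not $A_1+\delta_0I<0$), your $p_e\to\infty$ Schur reduction would fail without it.

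A smaller point: in the delayed proof the tail $2\sum_{n>N}\varpi_n z_n^2(t)$ is \emph{not} ``dominated by $\zeta^2$'' as in Theorem~\ref{Thm:PointActNonDelayed}. Here $\zeta$ enters the closed loop only at the delayed argument $t-\tau_y$, so the current-time tail is simply discarded as $\le 0$ once the second LMI in \eqref{eq:3.1LMI} (equivalently $\varpi_{N+1}<0$) holds; the $-2\delta_1\|c\|_N^{-2}$ diagonal entry in $\Theta$ comes entirely from the Halanay term, not from a tail bound. Apart from these two points, your sketch is correct and coincides with the paper's argument.
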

\begin{proof}
	Differentiating $V(t)$ along \eqref{eNN0}, \eqref{eq:X0Delay} we obtain
	\begin{equation}\label{eq:VnomDelay}
		\begin{array}{lll}
			&\hspace{-4mm}\dot{V}+2\delta V =  X_0^T(t)\left[P_0F_0 +F_0^TP_0+2\delta_0 P_0\right]X_0(t)\\
			&\hspace{-4mm}+2X_0^T(t)P_0\mathcal{L}_0\zeta(t-\tau_y)+2X_0^T(t)P_0\mathcal{B}_0 \mathcal{K}_0\left[\Upsilon_u(t)+\Upsilon_r(t) \right]\\
			&\hspace{-4mm}+2X_0^T(t)P_0\mathcal{L}_0\mathcal{C}_0\Upsilon_y(t)+2X_0^T(t)P_0\mathcal{L}_0C_1e^{-A\tau_y}e^{N-N_0}(t)\\
			&\hspace{-4mm}+2\sum_{n=N+1}^{\infty}(-\lambda_n+q+\delta_0)z_n^2(t)+2p_e\left|e^{N-N_0}(t) \right|^2_{A_1+\delta_0 I}\\
			&\hspace{-4mm}+2\sum_{n=N+1}^{\infty}z_n(t)b_n\mathcal{K}_0\left[X_0(t)+\Upsilon_u(t)+\Upsilon_r(t) \right].
		\end{array}
	\end{equation}	
	By arguments similar to \eqref{eq:PointActCrosTermNonDelayed} we have
	\begin{equation}\label{eq:CrosTermNonDelayed}
		\begin{array}{lll}
			&2\sum_{n=N+1}^{\infty}z_n(t)b_n\mathcal{K}_0\left[X_0(t)+\Upsilon_u(t)+\Upsilon_r(t) \right]\\
			&\leq \left[\frac{1}{\alpha}+\frac{1}{\alpha_1}+\frac{1}{\alpha_2} \right]\sum_{n=N+1}^{\infty}\lambda_nz_n^2(t)+\frac{2\alpha}{\pi^2N}\left|\mathcal{K}_0X_0(t) \right|^2\\
			&+\frac{2\alpha_1}{\pi^2N}\left|\mathcal{K}_0\Upsilon_u(t) \right|^2+\frac{2\alpha_2}{\pi^2N}\left|\mathcal{K}_0\Upsilon_r(t) \right|^2.
		\end{array}
	\end{equation}
	Differentiation of  $V_{S_0}$ and $V_{R_0}$ leads to
	\begin{equation}\label{eq:AugFunc_2}
		\begin{array}{lll}
			&\dot{V}_{S_0}+2\delta_0V_{S_0} = \left|\mathcal{K}_0X_0(t) \right|^2_{S_0} \\
			&\hspace{8mm}-\varepsilon_r\left|\mathcal{K}_0X_0(t)+\mathcal{K}_0\Upsilon_r(t)\right|^2_{S_0},\\
			&\dot{V}_{R_0}+2\delta_0V_{R_0} =r^2\left|\mathcal{K}_0\dot{X}_0(t) \right|^2_{R_0}\\
			&\hspace{8mm}-r\int_{t-r}^te^{-2\delta_0(t-s)}\left|\mathcal{K}_0\dot{X}_0(s) \right|^2_{R_0}ds.
		\end{array}
	\end{equation}
	By using Jensen's inequality we have
	\begin{equation*}\label{eq:JensenR0}
		\hspace{-1mm}-r\int_{t-r}^te^{-2\delta_0(t-s)}\left|\mathcal{K}_0\dot{X}_0(s) \right|^2_{R_0}ds\leq -\varepsilon_r\left|\mathcal{K}_0\Upsilon_r(t) \right|^2_{R_0}.
	\end{equation*}
	Let
	\begin{equation}\label{eq:muDelays}
		\begin{array}{lll}
			&Q_u(t) = X_0(t-r-\theta_M)-X_0(t-\tau_u),\\
			&Q_y(t) = X_0(t-\tau_M)-X_0(t-\tau_y).
		\end{array}
	\end{equation}
	Differentiation of $V_{S_i}$ and $V_{R_i}$, $i\in \left\{1,2\right\}$, gives:
	\begin{equation*}\label{eq:AugFunc_1}
		\begin{array}{lll}
			&\dot{V}_{S_1}+2\delta_0V_{S_1} = \varepsilon_r\left|\mathcal{K}_0\Upsilon_r(t)+\mathcal{K}_0X_0(t) \right|^2_{S_1} \\
			&-\varepsilon_{r+\theta_M}\left|\mathcal{K}_0\left(Q_u(t)+\Upsilon_u(t)+\Upsilon_r(t)+X_0(t) \right)\right|^2_{S_1},\\
			&\dot{V}_{S_2}+2\delta_0V_{S_2} =\left|X_0(t) \right|_{S_2}^2\\
			&-\varepsilon_{\tau_M}\left|Q_y(t)+\Upsilon_y(t)+X_0(t) \right|^2_{S_2},\\
			&\dot{V}_{R_1}+2\delta_0V_{R_1}= \theta_M^2\left|\mathcal{K}_0\dot{X}_0(t) \right|^2_{R_1}\\
			&\hspace{8mm}-\theta_M \int_{t-r-\theta_M}^{t-r}e^{-2\delta_0(t-s)}\left|\mathcal{K}_0\dot{X}_0(s) \right|^2_{R_1}ds,\\
			&\dot{V}_{R_2}+2\delta_0V_{R_2}= \tau_M^2\left|\dot{X}_0(t) \right|^2_{R_2}\\
			&\hspace{8mm}-\tau_M \int_{t-\tau_M}^{t}e^{-2\delta_0(t-s)}\left|\dot{X}_0(s) \right|^2_{R_2}ds.
		\end{array}
	\end{equation*}
	By Jensen's and Park's inequalities (see \cite{Fridman14_TDS}) to obtain
	\begin{equation*}\label{eq:JensenParkDelay}
		\begin{array}{lll}
			&-\theta_M \int_{t-r-\theta_M}^{t-r}e^{-2\delta_0(t-s)}\left|\mathcal{K}_0\dot{X}_0(s) \right|^2_{R_1}ds\\
			&\leq -\varepsilon_{r+\theta_M} \scriptsize\begin{bmatrix}
				\mathcal{K}_0\Upsilon_u(t)\\ \mathcal{K}_0Q_u(t)
			\end{bmatrix}^T\begin{bmatrix}R_1 & G_1\\ * & R_1 \end{bmatrix}\begin{bmatrix}
				\mathcal{K}_0\Upsilon_u(t)\\ \mathcal{K}_0Q_u(t)
			\end{bmatrix},\\
			& -\tau_M \int_{t-\tau_M}^{t}e^{-2\delta_0(t-s)}\left|\dot{X}_0(s) \right|^2_{R_2}ds\\
			&\leq -\varepsilon_{\tau_M}\scriptsize\begin{bmatrix}
				\Upsilon_y(t)\\ Q_y(t)
			\end{bmatrix}^T\begin{bmatrix}R_2 & G_2\\ * & R_2 \end{bmatrix}\begin{bmatrix}
				\Upsilon_y(t)\\ Q_y(t)
			\end{bmatrix}.
		\end{array}
	\end{equation*}
	To compensate $\zeta(t-\tau_y)$ we use
	\begin{equation}\label{eq:HalanayDelay}
		\begin{array}{lll}
			&-2\delta_1\operatorname{sup}_{-\tau_M\leq \theta \leq 0}W(t+\theta)\leq -2\delta_1 V(t-\tau_y(t))\\
			&\overset{\eqref{eq:ZetaEstBoundartAct}}{\leq}-2\delta_1 \left[\Upsilon_y(t)+X_0(t)\right]^TP_0\left[\Upsilon_y(t)+X_0(t)\right]\\
			&-2\delta_1\left\|c \right\|_N^{-2} \zeta^2(t-\tau_y)-2\delta_1p_e\left|e^{N-N_0}(t) \right|^2_{e^{-2A_1\tau_y}}
		\end{array}
	\end{equation}
	where $\delta_0=\delta_1+\delta$.  Let $\eta(t) =\text{col} \left\{X_0(t),\zeta(t-\tau_y),\Upsilon_y(t),\right.$  $\left.Q_y(t),\mathcal{K}_0\Upsilon_u(t),\mathcal{K}_0\Upsilon_r(t),\mathcal{K}_0Q_u(t),e^{N-N_0}(t) \right\}$. Using \eqref{eq:VnomDelay} - \eqref{eq:HalanayDelay}, we employ Halanay's inequality
	\begin{equation*}\label{eq:FinalIneq}
		\begin{array}{ll}
			&\dot{W}(t)+2\delta_0 W(t) -2\delta_1\operatorname{sup}_{-\tau_M\leq \theta \leq 0}W(t+\theta)\\
			&\leq \eta^T(t)\Psi_{1} \eta(t) + 2\sum_{n=N+1}^{\infty}  \varpi_n z_n^2(t) \leq 0, \ t\geq 0,
		\end{array}
	\end{equation*}
	if
	\begin{equation*}\label{eq:PointActLMIsDelayed}
		\begin{array}{lll}
			\varpi_n=-\lambda_n+q+\delta_0+\left[\frac{1}{2\alpha}+\frac{1}{2\alpha_1}+\frac{1}{2\alpha_2} \right]\lambda_n < 0, \  n>N, \\
			\Psi_1 = \Psi_{\text{full}}+\Lambda^T \left[\mathcal{K}_0^T\left(r^2R_0+\theta_M^2R_1\right)\mathcal{K}_0+\tau_M^2R_2 \right]\Lambda<0.
		\end{array}
	\end{equation*}
	Here
	$$\Lambda = [\Lambda_{0}, \mathcal{L}_0C_1e^{-A_1\tau_y}], \
	\Gamma_3 =2p_e(A_1+\delta_0I-\delta_1e^{-2A_1\tau_y}),$$
	\begin{equation}\label{eq:PsiMatrix1}
		\begin{array}{lll}
			&\Psi_{\text{full}} = \left[
			\begin{array}{c|c}
				\Psi_{0} &  \Sigma_3 \\
				\hline
				* &  \Gamma_3
			\end{array}
			\right], \quad \Sigma_3 = \scriptsize \begin{bmatrix}
				P_0\mathcal{L}_0C_1e^{-A_1\tau_y}\\ 0
			\end{bmatrix}.
		\end{array}
	\end{equation}
	Monotonicity of $\left\{\lambda_n \right\}_{n=1}^{\infty}$
	and Schur's complement imply that $\varpi_n<0, \ n>N$ iff the second LMI in \eqref{eq:3.1LMI} holds.
	We have
	{\color{blue}$\Gamma_3 =2p_e(A_1+\delta_0I-\delta_1e^{-2A_1\tau_y})<0$} due to \eqref{eq:N0}.
	Therefore, by Schur complement for $p_e\to \infty$ we obtain that $\Psi_1<0$ iff \eqref{eq:PsiLMI} holds. \textcolor{blue}{Hence, feasibility of \eqref{eq:3.1LMI}, \eqref{eq:PsiLMI} and Lemma \ref{Lem:Halanay} lead to $W(t) \leq \exp \big(-2\delta_{\tau_M}t \big) \sup_{-\tau_M\leq \theta \leq 0} W(\theta)$ for $t \geq 0$. 
		The latter implies \eqref{eq:L2StabilityDelay}.}
	%
	\textcolor{blue}{Finally, note that \eqref{eq:3.1LMI} and \eqref{eq:PsiLMI} are \emph{reduced-order} LMIs whose dimension is independent of $N$. By arguments similar to Theorem 3.1 in  \cite{katz2020constructiveDelay}
		it can be shown that \eqref{eq:3.1LMI} and \eqref{eq:PsiLMI} are feasible for large enough $N$ and small enough $\tau_M,\theta_M,r$. Moreover, by Schur complements, the LMIs feasibility  for $N$ implies their feasibility for $N+1$.}
\end{proof}

\subsection{Predictor-based $L^2$-stabilization: known input delay}\label{sec_predictor}
In this section we compensate the constant and known part  $r$ of $\tau_u$ subject to \eqref{eq:tauUdecomp} by using a classical predictor \cite{Artstein82,selivanov2016observer}.
%
Recall the observer \eqref{eq:ZhatSeries0} which satisfies \eqref{eq:obsODEDelayednew}. Using the notations \eqref{eq:C0A0},\eqref{hatzN}, \eqref{eq:ErrDefNonDelayed0} and \eqref{eq:ErrDefNonDelayed00} we obtain
\begin{equation}\begin{array}{ll} \label{eq:z^N0Vector}
		&\dot {\hat{z}}^{N_0}(t)= A_0 \hat{z}^{N_0}(t)+B_0u (t-\tau_u)+L_0C_0e^{N_0}(t-\tau_y)\\
		&\hspace{7mm}+L_0C_1e^{N-N_0}(t-\tau_y)+{L}_0\zeta(t-\tau_y), \quad t\geq 0.
\end{array}\end{equation}
We propose the following predictor-based control law
\begin{equation}\label{eq_Pred}
	\begin{array}{lll}
		&\hspace{-2mm}\bar{z}(t)=e^{A_0r}\hat {z}^{N_0}(t)+\int_{t-r}^te^{A_0(t-s)}B_0u(s)ds,\ u(t)=K_0\bar {z}(t)
	\end{array}
\end{equation}
Differentiating $\bar {z}(t)$ and using \eqref{eq:z^N0Vector} we obtain
\begin{equation*}\label{ep18e}
	\begin{array}{ll}
		&\dot{\bar{z}}(t)=A_0\bar {z}(t)+B_0u(t)\\
		&+e^{A_0r}B_0\left[u(t-\tau_u)-u(t-r)\right]
		+e^{A_0r}L_0\\
		&\times\Big[C_0e^{N_0}(t-\tau_y)
		+C_1e^{N-N_0}(t-\tau_y)+\zeta(t-\tau_y)\Big].
	\end{array}
\end{equation*}

We present the reduced-order 
closed-loop system as
\begin{equation}\label{eq:bXDelay}
	\begin{array}{llllll}
		\dot{\bar X}(t) = & \bar{F}_0 \bar X(t)+\bar{\mathcal{B}}_0\mathcal{K}_0\bar{\Upsilon}_u(t)+\bar{\mathcal{L}}_0 \mathcal{C}_0\bar{\Upsilon}_y(t)\\&
		+\bar{\mathcal{L}}_0\zeta(t-\tau_y)+\bar{\mathcal{L}}_0C_1e^{-A_1\tau_y}e^{N-N_0}(t),\\
		\dot{z}_n(t)=& (-\lambda_n+q)z_n(t)+b_n\mathcal{K}_0\bar X(t)\\
		&+b_n \mathcal{K}_0[\bar{\Upsilon}_u(t) +\bar{\Upsilon}_r(t)], \quad n>N, \quad t\geq0,
	\end{array}
\end{equation}
where
\begin{equation*}
	\begin{array}{lll}
		&\bar X(t)=\text{col}\{\bar z(t), e^{N_0}(t)\},\ \bar{\Upsilon}_{y}(t) = \bar X(t-\tau_y)-\bar X(t),\\
		&\bar{\Upsilon}_u(t) = \bar X(t-\tau_u)-\bar X(t-r), \ \mathcal{C}_0 = [0_{1\times(N_0+1)},C_0],\\
		&\bar{\Upsilon}_r(t) = \bar{X}(t-r)-\bar{X}(t),\ \bar{\mathcal{L}}_0 = \text{col}\left\{e^{A_0r}L_0, -L_0 \right\},\\
		&\bar{Q}_u(t) = \bar{X}(t-r-\theta_M)-\bar{X}(t-\tau_u),\\
		&\bar{Q}_y(t) = \bar{X}(t-\tau_M)-\bar{X}(t-\tau_y),
	\end{array}
\end{equation*}
\begin{equation}\label{eq:PredictorVectors}
	\begin{array}{lll}
		& \bar{F}_0 = \scriptsize\begin{bmatrix}A_0+B_0K_0 & e^{A_0r}L_0C_0 \\ 0 & A_0-L_0C_0 \end{bmatrix},\ \bar{\mathcal{B}}_0= \text{col}\left\{e^{A_0r}B_0,0\right\}.
	\end{array}
\end{equation}
As in the non-delayed case, here $e^{N-N_0}(t)$ satisfies \eqref{eNN0} and is exponentially decaying, whereas  $\zeta(t)$ satisfies \eqref{eq:ZetaEstBoundartAct}. From \eqref{eq_Pred} we have that exponential decay of $\bar X(t)$ implies exponential decay of $X_0(t)$ in \eqref{eq:ErrDefNonDelayed00}.

For $L^2$-stability analysis of \eqref{eq:bXDelay}, \eqref{eNN0} we fix $\delta_0>\delta$ and define the Lyapunov functional \eqref{eq:VComponents0}. Here $V(t)$ and  $V_{S_i}, V_{R_i}, \ i\in \left\{0,1,2\right\}$ are given by \eqref{eq:PointActVNonDelayed_L2} and \eqref{eq:VComponentsR}, respectively, with $X_0$ replaced by $\bar X$.

\textcolor{blue}{To state the main result of this section, let $G_1\in \mathbb{R}$ and $G_2\in \mathbb{R}^{2(N_0+1)\times2(N_0+1)}$ and $0<\alpha,\alpha_1,\alpha_2\in \mathbb{R}$. We introduce}
\begin{equation*}
	\begin{array}{lll}
		&\hspace{-3mm}\bar{\Psi}_{0} = \scriptsize \left[
		\begin{array}{c|c}
			\bar{\Theta} & \bar{\Sigma}_1 \ \ \bar{\Sigma}_2  \\
			\hline
			* & \operatorname{diag}\left\{\Gamma_1, \Gamma_2 \right\}
		\end{array}
		\right], \ \bar{\Theta} = \scriptsize\begin{bmatrix}\bar{\Phi} & P_0\bar{\mathcal{L}}_0\\
			* & -2\delta_1\left\|c \right\|_N^{-2} \end{bmatrix},\\
		&\hspace{-3mm}\bar{\Sigma}_1 =\scriptsize\begin{bmatrix}
			P_0\bar{\mathcal{L}}_0\mathcal{C}_0-2\delta_1P_0-\varepsilon_MS_2& -\varepsilon_MS_2\\0 & 0
		\end{bmatrix},\\
		&\hspace{-3mm}\bar{\Sigma}_2 = \scriptsize \begin{bmatrix}
			P_0\bar{\mathcal{B}}_0-\varepsilon_{r,M}\mathcal{K}_0^TS_1 \ & \bar{\Xi}_1  \ & -\varepsilon_{r,M}\mathcal{K}_0^TS_1\\ 0 \ & 0 \ & 0
		\end{bmatrix},\\
		&\hspace{-3mm}\bar{\Phi} = P_0\bar{F}_0+\bar{F}_0^TP_0+2\delta P_0+(1-\varepsilon_r)\mathcal{K}_0^TS_0\mathcal{K}_0\\
		&+\frac{2\alpha}{\pi^2N}\mathcal{K}_0^T\mathcal{K}_0+(\varepsilon_r-\varepsilon_{r,M})\mathcal{K}_0^TS_1\mathcal{K}_0+ (1-\varepsilon_M)S_2,\\
		&\hspace{-3mm}\bar{\Xi}_1 =-\varepsilon_{r}\mathcal{K}_0^TS_0+(\varepsilon_r - \varepsilon_{r,M})\mathcal{K}_0^TS_1,\\
		&\hspace{-3mm} \bar{\Lambda}_{0} = [\bar{F}_0, \bar{\mathcal{L}}_0, \bar{\mathcal{L}}_0\mathcal{C}_0, 0 ,\bar{\mathcal{B}}_0,0,0]
	\end{array}
\end{equation*}
where $\Gamma_i , \ i\in \left\{1,2,3\right\}$ and $\varepsilon_{\tau}, \ \tau \in \left\{r,\tau_M,r+\theta_M\right\}$ are given in \eqref{eq:PsiMatrix2}, \eqref{eq:PsiMatrix1}.
\begin{theorem}\label{Thm:NonLocalL2DelayPredB}
	\textcolor{blue}{Consider \eqref{eq:PDEPointActIntervalDelayed}, measurement \eqref{eq:BoundMeas} with $c\in L^2(0,1)$ satisfying \eqref{eq:AsscnNonDelayed}, control law \eqref{eq_Pred}. Let $\delta_0>\delta>0$ and $\delta_1 = \delta_0-\delta$. Let $N_0\in \mathbb{Z}_+$ satisfy \eqref{eq:N0} and $N\geq N_0+1$. Assume that $L_0$ and $K_0$ are subject to \eqref{eq:GainsDesignL}  and \eqref{eq:GainsDesignK}, respectively. Given $r,\theta_M,\tau_M>0$, let there exist positive definite matrices $P_0,S_2,R_2\in \mathbb{R}^{2(N_0+1)\times 2(N_0+1)}$, scalars $S_0,R_0,S_1,R_1, \alpha,\alpha_1,\alpha_2>0$, $G_1\in \mathbb{R}$ and $G_2\in \mathbb{R}^{2(N_0+1)\times 2(N_0+1)}$ such that \eqref{eq:3.1LMI} and
		\begin{equation}\label{eq:PsireducedPredB}
			\begin{array}{lll}
				&\bar{\Psi}_{0} +\bar{\Lambda}_{0}^T \left[\mathcal{K}_0^T\left(r^2R_0+\theta_M^2R_1\right)\mathcal{K}_0+\tau_M^2R_2 \right]\bar{\Lambda}_{0}\normalsize<0.
			\end{array}
		\end{equation}	
		hold. Then the solution $z(x,t)$ to \eqref{eq:PDEPointActIntervalDelayed} under the control law \eqref{eq_Pred} and the corresponding observer $\hat z(x,t)$ defined by \eqref{eq:ZhatSeries0}, \eqref{eq:obsODEDelayednew}
		satisfy \eqref{eq:L2StabilityDelay} for some $M>0$ and $\delta_{\tau_M}>0$ defined by \eqref{eq:DecRate}.  Moreover, LMIs \eqref{eq:3.1LMI} and \eqref{eq:PsireducedPredB} are always feasible if $N$ is large enough and $\tau_M,\theta_M,r$ are small enough. Feasibility of \eqref{eq:3.1LMI} and \eqref{eq:PsireducedPredB} for $N$ implies their feasibility for $N+1$.}
\end{theorem}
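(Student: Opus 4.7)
The plan is to mirror the proof of Theorem \ref{Thm:NonLocalL2Delay} under the substitutions $X_0\leftrightarrow \bar X$, $F_0\leftrightarrow \bar F_0$, $\mathcal{L}_0\leftrightarrow \bar{\mathcal{L}}_0$, $\mathcal{B}_0\leftrightarrow \bar{\mathcal{B}}_0$, using the reduced-order predictor-based system \eqref{eq:bXDelay} coupled with the exponentially decaying tail \eqref{eNN0}, and the Lyapunov functional $W(t)$ of \eqref{eq:VComponents0}, \eqref{eq:VComponentsR} with $X_0$ replaced by $\bar X$. The one structural change to keep track of is that the predictor absorbs the known part $r$ of the input delay in the slow dynamics: $\dot{\bar X}(t)$ in \eqref{eq:bXDelay} contains $\bar{\mathcal{B}}_0\mathcal{K}_0\bar\Upsilon_u(t)$ but \emph{not} $\bar{\mathcal{B}}_0\mathcal{K}_0\bar\Upsilon_r(t)$, whereas the infinite tail $z_n(t)$, $n>N$, still sees $u(t-\tau_u)$ and hence both $\bar\Upsilon_u+\bar\Upsilon_r$. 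This explains why $P_0\bar{\mathcal{B}}_0$ appears in $\bar\Sigma_2$ only in the $\bar\Upsilon_u$-column (first entry) and is absent from $\bar\Xi_1$, in contrast to $\Sigma_2$ and $\Xi_1$ of Theorem \ref{Thm:NonLocalL2Delay}.

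Differentiating $V$ along \eqref{eq:bXDelay}, \eqref{eNN0} gives an expression analogous to \eqref{eq:VnomDelay}. The three cross-terms $2\sum_{n>N} b_n z_n(t)\mathcal{K}_0[\bar X(t)+\bar\Upsilon_u(t)+\bar\Upsilon_r(t)]$ are split by Young's inequality with weights $\alpha,\alpha_1,\alpha_2>0$ and bounded using \eqref{eq:IntegTest} exactly as in \eqref{eq:CrosTermNonDelayed}, generating the $\frac{2\alpha}{\pi^2N}\mathcal{K}_0^T\mathcal{K}_0$ term in $\bar\Phi$ and the $\frac{2\alpha_1}{\pi^2N},\frac{2\alpha_2}{\pi^2N}$ entries in $\Gamma_2$. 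Differentiating $V_{S_i},V_{R_i}$, $i=0,1,2$, I apply Jensen's inequality to the $R_0$-integral and Park's inequality (with slack matrices $G_1,G_2$ subject to the first block in \eqref{eq:3.1LMI}) to the $R_1,R_2$-integrals, introducing the auxiliary mismatches $\bar Q_u,\bar Q_y$, as in \eqref{eq:AugFunc_2}--\eqref{eq:JensenParkDelay}. Finally, $\zeta(t-\tau_y)$ is absorbed via Halanay's inequality (Lemma \ref{Lem:Halanay}) together with \eqref{eq:ZetaEstBoundartAct} as in \eqref{eq:HalanayDelay}, producing the $-2\delta_1 P_0$ and $-2\delta_1\|c\|_N^{-2}$ contributions visible in $\bar\Theta$ and $\bar\Sigma_1$.

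Collecting all contributions with the augmented vector
\begin{equation*}
\bar\eta(t)=\operatorname{col}\left\{\bar X(t),\zeta(t-\tau_y),\bar\Upsilon_y(t),\bar Q_y(t),\mathcal{K}_0\bar\Upsilon_u(t),\mathcal{K}_0\bar\Upsilon_r(t),\mathcal{K}_0\bar Q_u(t),e^{N-N_0}(t)\right\},
\end{equation*}
I obtain $\dot W+2\delta_0 W - 2\delta_1\sup_{\theta\in[-\tau_M,0]} W(t+\theta) \le \bar\eta^T\bar\Psi_1\bar\eta + 2\sum_{n>N}\varpi_n z_n^2(t)$, where $\bar\Psi_1$ is the full-order counterpart of $\bar\Psi_0+\bar\Lambda_0^T[\mathcal{K}_0^T(r^2R_0+\theta_M^2R_1)\mathcal{K}_0+\tau_M^2R_2]\bar\Lambda_0$ augmented by an $e^{N-N_0}$-block $\Gamma_3=2p_e(A_1+\delta_0I-\delta_1e^{-2A_1\tau_y})$ and an off-diagonal column $\operatorname{col}\{P_0\bar{\mathcal{L}}_0C_1e^{-A_1\tau_y},0\}$. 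Monotonicity of $\{\lambda_n\}$ with Schur's complement reduce $\varpi_n<0$ for all $n>N$ to the second LMI of \eqref{eq:3.1LMI}. Since $\Gamma_3<0$ by \eqref{eq:N0}, letting $p_e\to\infty$ and applying Schur's complement removes the $e^{N-N_0}$-block and reduces $\bar\Psi_1<0$ to \eqref{eq:PsireducedPredB}. Halanay's inequality then yields $W(t)\le e^{-2\delta_{\tau_M}t}\sup_{-\tau_M\le\theta\le 0}W(\theta)$; \eqref{eq:L2StabilityDelay} follows because $X_0(t)$ in \eqref{eq:ErrDefNonDelayed00} and $\hat z^{N-N_0}(t)$ (satisfying \eqref{eq:ztailDelay}) inherit the decay of $\bar X(t)$ through \eqref{eq_Pred}.

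For the feasibility claim I note that \eqref{eq:PsireducedPredB} has dimension independent of $N$ and continuously degenerates, as $r,\theta_M,\tau_M\to 0^+$, to a non-delayed LMI of the form \eqref{eq:ReducedLMIs} with $\bar F_0$ in place of $F_0$. Since $\bar F_0$ is block upper-triangular with Hurwitz diagonal blocks $A_0+B_0K_0$ and $A_0-L_0C_0$ by \eqref{eq:GainsDesignL}, \eqref{eq:GainsDesignK}, the endgame of the proof of Theorem \ref{Thm:PointActNonDelayed} applies verbatim and yields feasibility for $N$ large enough. The only $N$-dependent quantities in \eqref{eq:PsireducedPredB} are $\|c\|_N^2$, $\lambda_{N+1}^{-1}$ and the $1/N$ factors, all monotone-decreasing in $N$, whereas $P_0\bar F_0+\bar F_0^TP_0+2\delta P_0$ is $N$-independent; Schur complement then transfers feasibility from $N$ to $N+1$, as at the end of Theorem \ref{Thm:PointActNonDelayed}. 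The main obstacle I expect is pure bookkeeping: tracking that $P_0\bar{\mathcal{B}}_0$ enters $\bar\Sigma_2$ only in the $\bar\Upsilon_u$-slot (and not in $\bar\Xi_1$), and verifying that the decoupled component $\hat z^{N-N_0}$ still decays once $\bar X$ does.
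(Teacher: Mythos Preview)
Your proposal is correct and follows essentially the same route as the paper: the paper's own proof states that it is ``essentially identical to the proof of Theorem~\ref{Thm:NonLocalL2Delay}'', introduces the same augmented vector $\bar\eta(t)$ you wrote, obtains the same full-order matrix $\bar\Psi$ with the $e^{N-N_0}$-block $\Gamma_3$, and then removes that block via Schur complement and $p_e\to\infty$ to arrive at \eqref{eq:PsireducedPredB}. Your observation that the predictor eliminates $\bar{\mathcal B}_0\mathcal K_0\bar\Upsilon_r$ from $\dot{\bar X}$ (hence $P_0\bar{\mathcal B}_0$ is absent from $\bar\Xi_1$ and from the sixth block-column of $\bar\Lambda_0$) while the tail $z_n$, $n>N$, still carries $\bar\Upsilon_u+\bar\Upsilon_r$ is exactly the structural point the paper encodes in $\bar\Sigma_2$, $\bar\Xi_1$ and $\bar\Lambda_0$; the paper does not spell it out but your bookkeeping matches.
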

\begin{proof}
	\textcolor{blue}{The proof is essentially identical to proof of Theorem \ref{Thm:NonLocalL2Delay}. Hence, we only state the differences. Let $\eta(t) = \text{col}\left\{\bar{X}(t),\zeta(t-\tau_y),\bar{\Upsilon}_y(t),\bar{Q}_y(t),\mathcal{K}_0\bar{\Upsilon}_u(t),\right.$ $\left.\mathcal{K}_0\bar{\Upsilon}_r(t),\mathcal{K}_0\bar{Q}_u(t),e^{N-N_0}(t) \right\}$.  By arguments similar to \eqref{eq:VnomDelay}-\eqref{eq:HalanayDelay} we obtain
		\begin{equation}\label{eq:FinalIneqPredB}
			\begin{array}{ll}
				&\dot{W}(t)+2\delta_0 W(t) -2\delta_1\operatorname{sup}_{-\tau_M\leq \theta \leq 0}W(t+\theta)\\
				&\leq \eta^T(t)\Psi_{2} \eta(t) + 2\sum_{n=N+1}^{\infty}  \varpi_n z_n^2(t) \leq 0, \ t\geq 0,
			\end{array}
		\end{equation}
		if
		\begin{equation}\label{eq:PointActLMIsPredB}
			\begin{array}{lll}
				\varpi_n=-\lambda_n+q+\delta_0+\left[\frac{1}{2\alpha}+\frac{1}{2\alpha_1}+\frac{1}{2\alpha_2} \right]\lambda_n < 0, \  n>N, \\
				\Psi_2 = \bar{\Psi}+\bar{\Lambda}^T \left[\mathcal{K}_0^T\left(r^2R_0+\theta_M^2R_1\right)\mathcal{K}_0+\tau_M^2R_2 \right]\bar{\Lambda}<0.
			\end{array}
		\end{equation}
		Here $\bar{\Lambda} = [\bar{\Lambda}_{0}, \bar{\mathcal{L}}_0C_1e^{-A_1\tau_y}]$ and
		\begin{equation}\label{eq:PsiMatrixPredB}
			\begin{array}{lll}
				&\bar{\Psi} = \scriptsize\left[
				\begin{array}{c|c}
					\bar{\Psi}_{0} &  \Sigma_3 \\
					\hline
					* &  \Gamma_3
				\end{array}
				\right], \ \Sigma_3 = \scriptsize \begin{bmatrix}
					P_0\bar{\mathcal{L}}_0C_1e^{-A_1\tau_y}\\ 0
				\end{bmatrix}.
			\end{array}
		\end{equation}
		Monotonicity of $\left\{\lambda_n \right\}_{n=1}^{\infty}$
		and Schur's complement imply that $\varpi_n<0, \ n>N$ iff the second LMI in \eqref{eq:3.1LMI} holds. Finally, note that \eqref{eq:N0} implies $\Gamma_3<0$. 
		By Schur complement and $p_e\to \infty$,  $\Psi_2<0$ iff \eqref{eq:PsireducedPredB} holds. Note that \eqref{eq:3.1LMI} and \eqref{eq:PsireducedPredB} are again of reduced-order (i.e, the dimension is independent of $N$).}
\end{proof}

\subsection{Predictor-based $L^2$-stabilization: unknown input delay}\label{Sec:PredictorKnownDelay}
In this section we assume an input delay $\tau_u(t)=r+\theta(t)$ with a known constant part $r>0$ and unknown $\theta(t)\in[0,\theta_M]$. Since $\theta(t)$ is unknown, the observer \eqref{eq:ZhatSeries0} is designed to satisfy \eqref{eq:obsODEDelayednew} with $u(t-\tau_u)$ \emph{replaced} by $u(t-r)$. Therefore, \eqref{eq:z^N0Vector} is modified as follows:
\begin{equation}\begin{array}{ll} \label{eq:zN0PredC}
		\dot {\hat{z}}^{N_0}(t)=& A_0 \hat{z}^{N_0}(t)+B_0u (t-r)+L_0C_0e^{N_0}(t-\tau_y)\\
		&+L_0C_1e^{N-N_0}(t-\tau_y)+{L}_0\zeta(t-\tau_y)
\end{array}\end{equation}
whereas $\hat z^{N-N_0}(t)$ satisfies
\begin{equation}\label{eq:zNminN0PredC}
	\dot {\hat{z}}^{N-N_0}(t)= A_1 \hat{z}^{N-N_0}(t)+B_1u(t-r).
\end{equation}
Furthermore, the  estimation error satisfies
\begin{equation}\label{eq:errorPredC}
	\begin{array}{ll}
		&\dot e^{N_0}(t)=A_0e^{N_0}(t)+B_0[u(t-\tau_u)-u(t-r)]\\
		&\quad -L_0[C_0e^{N_0}(t-\tau_y)+C_1 e^{N-N_0}(t-\tau_y)+\zeta(t-\tau_y)],
		\\
		&\dot e^{N-N_0}(t)=A_1 e^{N-N_0}(t)+B_1[u(t-\tau_u)-u(t-r)].
	\end{array}
\end{equation}
As in \cite{katz2020constructiveDelay}, uncertainty in $\tau_u$ leads to \emph{coupling} of $e^{N-N_0}(t)$  with $u(t)$. We propose the predictor-based control law \eqref{eq_Pred}. Differentiating $\bar {z}(t)$ and using \eqref{eq:zN0PredC} we obtain
\begin{equation}\label{eq:PredPredC}
	\begin{array}{ll}
		&\dot{\bar{z}}(t)=(A_0+B_0K_0)\bar {z}(t)
		+e^{A_0r}L_0\\
		&\times\Big[C_0e^{N_0}(t-\tau_y)
		+C_1e^{N-N_0}(t-\tau_y)+\zeta(t-\tau_y)\Big]
	\end{array}
\end{equation}

Differently from the case of a known $\tau_u$, we introduce
\begin{equation}\label{eq:NewXPredC}
	\bar{X}(t)=\text{col}\{\bar z(t), e^{N_0}(t),e^{N-N_0}(t)\}
\end{equation}
as the closed-loop state, which includes $e^{N-N_0}(t)$. Note that differently from \cite{katz2020constructiveDelay}, $\hat z^{N-N_0}(t)$ is not a part of $\bar{X}(t)$. Therefore, for a given $N$, the LMIs subsequently obtained will not be of reduced-order, but are of essentially smaller dimension than in \cite{katz2020constructiveDelay}. Recall $\bar{Q}_u(t)$, $\bar{Q}_y(t)$, $\bar{\Upsilon}_u(t)$, $\bar{\Upsilon}_y(t)$ and $\bar{\Upsilon}_r(t)$ given in \eqref{eq:PredictorVectors} and let
\begin{equation*}\label{eq:NotationsPredC}
	\begin{array}{lll}
		& \bar F = \scriptsize\begin{bmatrix}A_0+B_0K_0 & e^{A_0r}L_0C_0 & e^{A_0r}L_0C_1\\ 0 & A_0-L_0C_0 & -L_0 C_1\\
			0 & 0 & A_1 \end{bmatrix},\\
		& \bar{\mathcal{L}} = \text{col}\left\{e^{A_0r}L_0, -L_0, 0\right\}, \ \mathcal{C} = [0_{1\times(N_0+1)},\ C_0, \ C_1],\\
		& \bar {\mathcal{B}}= \text{col}\left\{0_{(N_0+1) \times 1}, B_0, B_1\right\}, \ \mathcal{K}_0 = [K_0, \ 0, \ 0].
	\end{array}
\end{equation*}
The closed-loop system is governed by
\begin{equation}\label{eq:ClosedLoopPredC}
	\begin{array}{llllll}
		\dot{\bar X}(t) = & \bar{F} \bar{X}(t)+\bar{\mathcal{B}}\mathcal{K}_0\bar{\Upsilon}_u(t)+\bar{\mathcal{L}} \mathcal{C}\bar{\Upsilon}_y(t) +\bar{\mathcal{L}}\zeta(t-\tau_y),\\
		\dot{z}_n(t)=& (-\lambda_n+q)z_n(t)+b_n\mathcal{K}_0\bar{X}(t)\\
		&+b_n \mathcal{K}_0[\bar{\Upsilon}_u(t)+\bar{\Upsilon}_r(t)], \quad n>N, \quad t\geq 0
	\end{array}
\end{equation}
where $\zeta(t)$ satisfies \eqref{eq:ZetaEstBoundartAct}. From \eqref{eq:zNminN0PredC} follows that $\hat{z}^{N-N_0}(t)$ is exponentially decaying if the closed-loop system \eqref{eq:ClosedLoopPredC} is exponentially decaying.

For $L^2$-stability of the closed-loop system \eqref{eq:ClosedLoopPredC} let $\delta_0>\delta$ and define the Lyapunov functional \eqref{eq:VComponents0} with $V(t)$ replaced by $V_0(t)$, given in \eqref{eq:PointActVNonDelayed_L2}, $V_{S_i}, V_{R_i}, \ i\in \left\{0,1,2\right\}$ given in \eqref{eq:VComponentsR} and $X_0(t)$ is replaced by $\bar{X}(t)$ everywhere.
\textcolor{blue}{To state the main result of this section, let $G_1\in \mathbb{R}$ and $G_2\in \mathbb{R}^{(N+N_0+2)\times(N+N_0+2)}$ and $0<\alpha,\alpha_1,\alpha_2\in \mathbb{R}$. Let}
\begin{equation*}
	\begin{array}{lll}
		&\bar{\Psi}_1 = \scriptsize\left[
		\begin{array}{c|c }
			\bar{\Psi}_{2} &  \Sigma_4 \ \ \Sigma_5 \\
			\hline
			* &  \operatorname{diag}\left\{\Gamma_1,\Gamma_2\right\}
		\end{array}
		\right], \ \bar{\Psi}_2 = \scriptsize\begin{bmatrix}\bar{\Phi}_1 & P_0\bar{\mathcal{L}}\\
			* & -2\delta_1\left\|c \right\|_N^{-2} \end{bmatrix}  \\
		& \Sigma_4 =\scriptsize\begin{bmatrix}
			P_0\bar{\mathcal{L}}\mathcal{C}-2\delta_1P_0-\varepsilon_MS_2& \quad -\varepsilon_MS_2\\0 & 0
		\end{bmatrix},\\
		& \Sigma_5 = \scriptsize \begin{bmatrix}
			P_0\bar{\mathcal{B}}-\varepsilon_{r,M}\mathcal{K}_0^TS_1 \ & \bar{\Xi}_1  \ & -\varepsilon_{r,M}\mathcal{K}_0^TS_1\\ 0 \ & 0 \ & 0
		\end{bmatrix}, \\
	\end{array}
\end{equation*}
\begin{equation*}
	\begin{array}{lll}
		&\bar{\Phi}_1 = P_0\bar{F}+\bar{F}^TP_0+2\delta P_0+(1-\varepsilon_r)\mathcal{K}_0^TS_0\mathcal{K}_0\\
		&+\frac{2\alpha}{\pi^2N}\mathcal{K}_0^T\mathcal{K}_0+(\varepsilon_r-\varepsilon_{r,M})\mathcal{K}_0^TS_1\mathcal{K}_0+ (1-\varepsilon_M)S_2,\\
		&\bar{\Lambda}_1 = [\bar{F},\bar{\mathcal{L}}, \bar{\mathcal{L}}\mathcal{C},0,\bar{\mathcal{B}},0,0]
	\end{array}
\end{equation*}
with $\Gamma_1$, $\Gamma_2$ given in \eqref{eq:PsiMatrix2}.
\begin{theorem}\label{Thm:NonLocalL2DelayPredC}
	Consider \eqref{eq:PDEPointActIntervalDelayed} with \emph{unknown} input delay $\tau_u(t)$, measurement \eqref{eq:BoundMeas} with $c\in L^2(0,1)$ satisfying \eqref{eq:AsscnNonDelayed}, control law \eqref{eq_Pred}. Let $\delta_0>\delta>0$ and $\delta_1 = \delta_0-\delta$. Let $N_0\in \mathbb{Z}_+$ satisfy \eqref{eq:N0} and $N\geq N_0+1$. Let $L_0$ and $K_0$ satisfy \eqref{eq:GainsDesignL}  and \eqref{eq:GainsDesignK}, respectively. Given $r,\theta_M,\tau_M>0$, let there exist positive definite matrices $P_0,S_2,R_2\in \mathbb{R}^{(N+N_0+2)\times (N+N_0+2)}$, scalars $S_0,R_0,S_1,R_1, \alpha,\alpha_1,\alpha_2>0$, $G_1\in \mathbb{R}$ and $G_2\in \mathbb{R}^{(N+N_0+2)\times (N+N_0+2)}$ such that \eqref{eq:3.1LMI} and
	\begin{equation}\label{eq:Psi3Unknown}
		\bar{\Psi}_1+\bar{\Lambda}_1^T \left[\mathcal{K}_0^T\left(r^2R_0+\theta_M^2R_1\right)\mathcal{K}_0+\tau_M^2R_2 \right]\bar{\Lambda}_1<0
	\end{equation}
	hold. Then the solution $z(x,t)$ to \eqref{eq:PDEPointActIntervalDelayed} under the control law \eqref{eq_Pred} and the observer $\hat z(x,t)$ defined by \eqref{eq:ZhatSeries0}, \eqref{eq:zN0PredC} and \eqref{eq:zNminN0PredC}
	satisfy \eqref{eq:L2StabilityDelay} for some $M>0$ and $\delta_{\tau_M}>0$ defined by \eqref{eq:DecRate}. The LMIs \eqref{eq:3.1LMI} and \eqref{eq:Psi3Unknown} are always feasible if $N$ is large enough and $\tau_M,\theta_M,r$ are small enough.
\end{theorem}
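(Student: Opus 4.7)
The plan is to follow the same Lyapunov-Halanay template used in Theorems \ref{Thm:NonLocalL2Delay} and \ref{Thm:NonLocalL2DelayPredB}, adapting it to the enlarged closed-loop state $\bar X(t)=\operatorname{col}\{\bar z(t),e^{N_0}(t),e^{N-N_0}(t)\}$ introduced in \eqref{eq:NewXPredC}. Because $\tau_u$ is unknown, $e^{N-N_0}$ is now coupled to the predictor variable $\bar z$ through $u(t-\tau_u)-u(t-r)$, so it must be carried inside $\bar X$ rather than analysed separately via the scalar weight $p_e$. This forces $P_0$ to have dimension $(N+N_0+2)\times(N+N_0+2)$, which is why \eqref{eq:Psi3Unknown} is not of reduced order — but it is still far smaller than the LMIs in \cite{katz2020constructiveDelay} since $\hat z^{N-N_0}$ does not enter $\bar X$.

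First I would differentiate $W(t)$ along \eqref{eq:ClosedLoopPredC}. The $V_0$ derivative yields $\bar X^T(P_0\bar F+\bar F^T P_0+2\delta_0 P_0)\bar X$ together with the input terms $2\bar X^T P_0\bar{\mathcal{B}}\mathcal{K}_0\bar\Upsilon_u+2\bar X^T P_0\bar{\mathcal{L}}\mathcal{C}\bar\Upsilon_y+2\bar X^T P_0\bar{\mathcal{L}}\zeta(t-\tau_y)$, and the residual series $2\sum_{n>N}(-\lambda_n+q+\delta_0)z_n^2+2\sum_{n>N}z_n b_n\mathcal{K}_0[\bar X+\bar\Upsilon_u+\bar\Upsilon_r]$. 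Applying Young's inequality with weights $\alpha,\alpha_1,\alpha_2$ exactly as in \eqref{eq:PointActCrosTermNonDelayed}--\eqref{eq:CrosTermNonDelayed}, and using \eqref{eq:IntegTest}, I bound the last cross-term by $(\tfrac1\alpha+\tfrac1{\alpha_1}+\tfrac1{\alpha_2})\sum_{n>N}\lambda_n z_n^2$ plus the contributions $\tfrac{2\alpha}{\pi^2N}|\mathcal{K}_0\bar X|^2+\tfrac{2\alpha_1}{\pi^2N}|\mathcal{K}_0\bar\Upsilon_u|^2+\tfrac{2\alpha_2}{\pi^2N}|\mathcal{K}_0\bar\Upsilon_r|^2$, which enter $\bar\Phi_1$, $\Gamma_1$ and $\Gamma_2$ respectively. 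The derivatives of $V_{S_i},V_{R_i}$ are treated by Jensen's and Park's inequalities (with slack $G_1,G_2$ constrained by \eqref{eq:3.1LMI}) precisely as in the proof of Theorem \ref{Thm:NonLocalL2Delay}, producing the remaining off-diagonal entries of $\bar\Psi_1$.

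Next I would invoke Halanay's inequality (Lemma \ref{Lem:Halanay}) to compensate $\zeta(t-\tau_y)$: the term $-2\delta_1\sup_{-\tau_M\le\theta\le0}W(t+\theta)\le-2\delta_1 V_0(t-\tau_y)$ contributes $-2\delta_1\|c\|_N^{-2}\zeta^2(t-\tau_y)-2\delta_1\bar X^T(t-\tau_y)P_0\bar X(t-\tau_y)$; rewriting $\bar X(t-\tau_y)=\bar X(t)+\bar\Upsilon_y(t)$ produces the block of $\Sigma_4$ and the $-2\delta_1 P_0$ contribution in $\Gamma_1$. Collecting everything, the inequality $\dot W+2\delta_0 W-2\delta_1\sup_{-\tau_M\le\theta\le 0}W(t+\theta)\le \eta^T\Psi_3\eta+2\sum_{n>N}\varpi_n z_n^2$ is obtained with $\eta=\operatorname{col}\{\bar X,\zeta(t-\tau_y),\bar\Upsilon_y,\bar Q_y,\mathcal{K}_0\bar\Upsilon_u,\mathcal{K}_0\bar\Upsilon_r,\mathcal{K}_0\bar Q_u\}$, where $\varpi_n<0$ for $n>N$ is equivalent by Schur complement to the second inequality in \eqref{eq:3.1LMI}, and $\Psi_3<0$ is equivalent (after absorbing the $R_0,R_1,R_2$ quadratic residuals via a Schur complement on $\bar\Lambda_1$) to \eqref{eq:Psi3Unknown}. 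Then Lemma \ref{Lem:Halanay} yields $W(t)\le e^{-2\delta_{\tau_M}t}\sup_{-\tau_M\le\theta\le 0}W(\theta)$, and since $W\ge V_0\ge\min(\lambda_{\min}(P_0),1)(|\bar X|^2+\|z\|_N^2)$, the estimate \eqref{eq:L2StabilityDelay} for $z$ and $z-\hat z$ follows after adding back the decay of $\hat z^{N-N_0}$ from \eqref{eq:zNminN0PredC} which is driven by the exponentially decaying $u(t-r)$.

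For feasibility, the main obstacle is that now $\bar F$ has a nontrivial $(1,3)$-block $e^{A_0 r}L_0C_1$ and a $(2,3)$-block $-L_0C_1$, both of whose norms are controlled by $\|C_1\|\le\|c\|_N\to 0$ as $N\to\infty$. Choosing $P_0=\operatorname{diag}\{\rho_1 P_{\mathrm c},\rho_2 P_{\mathrm o},\rho_3 I\}$ with $\rho_3\gg\rho_2\gg\rho_1>0$ (a block-diagonal ansatz exploiting the triangular structure of $\bar F$), the diagonal blocks of $P_0\bar F+\bar F^T P_0+2\delta P_0$ are negative by \eqref{eq:GainsDesignL}, \eqref{eq:GainsDesignK} and \eqref{eq:N0}, while the off-diagonal contributions are absorbed by the dominant diagonal of the lowest block once $N$ is large enough to make $\|C_1\|$ small. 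Sending $r,\theta_M,\tau_M\to 0$ makes $\varepsilon_r,\varepsilon_{r,M},\varepsilon_M\to 1$ and the quadratic residual $\mathcal{K}_0^T(r^2R_0+\theta_M^2R_1)\mathcal{K}_0+\tau_M^2R_2\to 0$, so choosing the scalars $S_i,R_i$ of moderate size and $\alpha=\alpha_1=\alpha_2=1$ drives $\bar\Psi_1$ to a block with the desired negative dominant diagonal, which together with a standard Schur complement argument on \eqref{eq:3.1LMI} yields strict feasibility for sufficiently large $N$ and sufficiently small $r,\theta_M,\tau_M$. The hard part is tracking how the growing dimension of the lowest diagonal block in $\bar F$ interacts with the small-norm off-diagonal coupling; the triangular block-diagonal Lyapunov construction is what makes this manageable without the feasibility-for-$N+1$ monotonicity argument available in the reduced-order cases.
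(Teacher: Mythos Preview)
Your proposal is correct and follows essentially the same Lyapunov--Halanay route as the paper: the paper's own proof simply says ``essentially identical to the proof of Theorem~\ref{Thm:NonLocalL2Delay}'', defines the same augmented vector $\eta$, and arrives at $\Psi_3<0$ being exactly \eqref{eq:Psi3Unknown} together with the Schur-complement equivalence of $\varpi_n<0$ and the second inequality in \eqref{eq:3.1LMI}. Two minor bookkeeping slips to fix when you write it out: the $\tfrac{2\alpha_1}{\pi^2N}|\mathcal{K}_0\bar\Upsilon_u|^2$ term lands in $\Gamma_2$ (not $\Gamma_1$), and no Schur complement is needed to pass from $\Psi_3<0$ to \eqref{eq:Psi3Unknown} --- the quadratic $\bar\Lambda_1^T[\cdots]\bar\Lambda_1$ arises directly from substituting $\dot{\bar X}=\bar\Lambda_1\eta$ into the $r^2R_0,\theta_M^2R_1,\tau_M^2R_2$ terms of $\dot V_{R_i}$.
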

\begin{proof}
	The proof is essentially identical to proof of Theorem \ref{Thm:NonLocalL2Delay}. Hence, we only state the differences. Let $\eta(t) = \text{col}\left\{\bar{X}(t),\zeta(t-\tau_y),\bar{\Upsilon}_y(t),\bar{\mu}_y(t),\mathcal{K}_0\bar{\Upsilon}_u(t),\mathcal{K}_0\bar{\Upsilon}_r(t)\right. $ $\left.,\mathcal{K}_0\bar{Q}_u(t)\right\}$. Similar to \eqref{eq:VnomDelay}-\eqref{eq:HalanayDelay} we obtain
	\begin{equation*}\label{eq:FinalIneqPredC}
		\begin{array}{ll}
			&\dot{W}(t)+2\delta_0 W(t) -2\delta_1\operatorname{sup}_{-\tau_M\leq \theta \leq 0}W(t+\theta)\\
			&\leq \eta^T(t)\Psi_{3} \eta(t) + 2\sum_{n=N+1}^{\infty}  \varpi_n z_n^2(t) \leq 0, \ t\geq 0,
		\end{array}
	\end{equation*}
	if
	\begin{equation}\label{eq:PointActLMIsPredC}
		\begin{array}{lll}
			\hspace{-1mm}\varpi_n=-\lambda_n+q+\delta_0+\left[\frac{1}{2\alpha}+\frac{1}{2\alpha_1}+\frac{1}{2\alpha_2} \right]\lambda_n < 0, \  n>N, \\
			\hspace{-1mm}\Psi_3 = \bar{\Psi}_1+\bar{\Lambda}_1^T \left[\mathcal{K}_0^T\left(r^2R_0+\theta_M^2R_1\right)\mathcal{K}_0+\tau_M^2R_2 \right]\bar{\Lambda}_1<0.
		\end{array}
	\end{equation}
	Monotonicity of $\left\{\lambda_n \right\}_{n=1}^{\infty}$
	and Schur's complement imply that $\varpi_n<0, \ n>N$ iff the second LMI in \eqref{eq:3.1LMI} holds, whereas $\Psi_3<0$ is exactly \eqref{eq:Psi3Unknown}.
\end{proof}
\section{Example: temperature control in a rod}
Consider control of heat flow in the  rod  with constant thermal conductivity, mass density, specific heat and  reaction coefficient
\cite{christofides2001,curtain2009transfer}.  The control action effects the heat flow at one end, while keeping the heat flow in the other end fixed.
The model of spatiotemporal evolution of the dimensionless rod temperature
(denoted by $z(x,t)$) is given by \eqref{eq:PDE1PointActNonDelayed}, where $q$ is the reaction coefficient. We consider $q=3$, which results in an unstable open-loop system. The measurement of
the distributed rod temperature  is given by \eqref{eq:BoundMeasNonDelayed}, where
$c(x) = \chi_{[0.3,0.9]}(x)$ (i.e, the indicator function of $[0.3,0.9]$).
The control objective is to stabilize the rod temperature at the unstable steady state $z(x,t)=0$.

The observer and controller gains are found from \eqref{eq:GainsDesignL} and \eqref{eq:GainsDesignK}. For non-delayed stabilization we consider $\delta\in \left\{0.1,1,2,5\right\}$ which result in $N_0=0$. For each $\delta$ we compute the corresponding 
gains and find the minimum value of $N$ such that the LMI of Theorem \ref{Thm:PointActNonDelayed} holds (see Table \ref{Tab:NoDelay1}).
\begin{table}[h]
	\begin{center}
		\footnotesize\begin{tabular}{|c|c|c|c|c|c|}
			\hline
			$\delta$ & $0.1$ & $1$& $2$& $5$ &\textcolor{blue}{7.5}\\
			\hline
			$N$ & $3$ & $4$ & $4$ & $4$ & \textcolor{blue}{$5$}\\
			\hline
			$K_0$ & $-5$ & $-5$ & $-7$ & $-13$ & \textcolor{blue}{$-18$}\\	
			\hline
			$L_0$ & $5.5$ & $8.33$ & $11.67$ & $21.6$& \textcolor{blue}{$29.8$}\\
			\hline		
		\end{tabular}
	\end{center}
	\caption{\label{Tab:NoDelay1} {\color{blue} Minimal $N$ that guarantees decay rate $\delta$: non-delayed case.}}
\end{table}

For delayed stabilization we choose $\delta = 0$, which results in $N_0=0$. The controller and observer
gains are given by
\begin{equation}\label{eq:ExGains}
	K_0 = -5.5, \quad L_0 = 5.5.
\end{equation}
We  verify the feasibility of LMIs of {\color{blue}Theorems \ref{Thm:NonLocalL2Delay} (no predictor), 
	\ref{Thm:NonLocalL2DelayPredB} (predictor, known $\tau_u$) 
	and \ref{Thm:NonLocalL2DelayPredC} (predictor, unknown $\tau_u$)} 
for $\delta_0=\delta_1$. Since the corresponding LMIs are strict, feasibility with $\delta=0$ implies their feasibility for small enough $\delta_*>0$. In the first test we fix $\tau_M = \theta_M = 10^{-7}$ and find the minimal value of $N$ which guarantees the feasibility of the LMIs for increasing values of $r$. The results are given in Table \ref{Tab:NoTauTheta1}. It is seen  that   predictor allows to increase the maximal value of $r$
from $0.14$ till $0.3$. \textcolor{blue}{The maximum value of $r$, with corresponding $N$ for which the LMIs of Theorems \ref{Thm:NonLocalL2Delay}, \ref{Thm:NonLocalL2DelayPredB} and \ref{Thm:NonLocalL2DelayPredC} were found feasible are $r = 0.16 \ (N=18)$, $r=0.44 \ (N = 24)$ and $r=0.41\ (N = 26)$, respectively.}
\begin{table}[h]
	\begin{center}
		\footnotesize\begin{tabular}{|c|c|c|c|c|c|c|}
			\hline
			$r$ & 0.06 & 0.1 &  0.14 & 0.18 & 0.26 & 0.3\\
			\hline
			{\color{blue} Th. \ref{Thm:NonLocalL2Delay}: no predictor}
			& 6 & 6 & 14 & - & - & -\\
			\hline	
			{\color{blue} Th. \ref{Thm:NonLocalL2DelayPredB}, \ref{Thm:NonLocalL2DelayPredC}: predictor} & 6 & 6 & 6 & 8 & 12 & 16\\
			\hline
		\end{tabular}
	\end{center}
	\caption{\label{Tab:NoTauTheta1} {\color{blue} Minimal $N$ for the stability: given $r$ and $\tau_M=\theta_M = 10^{-7}$.}}
\end{table}

In the second test we fix $\tau_M=\theta_M$ and find the maximum value of $r$ and the corresponding minimal value of $N$ for which  LMIs are feasible. The results are given in Table \ref{Tab:PosTauTheta}. It is seed that for $\theta_M=\tau_M=0.01$ the LMIs of Theorems \ref{Thm:NonLocalL2DelayPredB} and  \ref{Thm:NonLocalL2DelayPredC} allow for larger $r$ than in Theorem \ref{Thm:NonLocalL2Delay}. For $\theta_M=\tau_M=0.04$ the same comparison holds only for
Theorems \ref{Thm:NonLocalL2DelayPredB} and Theorem \ref{Thm:NonLocalL2Delay},
whereas no feasibility was obtained in Theorem \ref{Thm:NonLocalL2DelayPredC} due to higher-dimensional LMIs for $N=30$.
\begin{table}[h]
	\begin{center}
		\footnotesize\begin{tabular}{|c|c|c|}
			\hline
			$\tau_M=\theta_M$ & $
			0.01$ & $
			0.04$ \\
			\hline
			{\color{blue} Th. \ref{Thm:NonLocalL2Delay}: no predictor} & $r=0.14$, $N=30$ & $r=0.12$, $N=30$ \\		
			\hline
			{\color{blue} Th. \ref{Thm:NonLocalL2DelayPredB}: predictor} & $r=0.3$, $N=30$ & $r=0.25$, $N=30$ \\	
			\hline
			{\color{blue} Th. \ref{Thm:NonLocalL2DelayPredC}: predictor}& $r=0.25$, $N=22$ & - \\
			\hline
		\end{tabular}
	\end{center}
	\caption{\label{Tab:PosTauTheta}  Maximal $r$ and minimal $N$ that guarantee the stability.}
\end{table}

Our reduced-order LMIs are feasible for larger values of $N$ than in \cite{katz2020constructiveDelay} (where for $N>9$ we could not verify LMIs) due to a significantly lower computational complexity. A larger $N$ allows larger delays in example. {\color{blue} For additional LMI simulations with different gains see \cite{katz2020Reduced}).}

For simulations of the solutions to the closed-loop systems we choose observer and controller gains given by \eqref{eq:ExGains}. We fix $\tau_M=\theta_M=0.01$ and choose the \emph{known} delays $\tau_u(t) = r+0.01\sin^2(120t)$ and $\tau_y(t) = 0.01\cos^2(120t)$. Note that $\dot{\tau}_y<1$ and $ \dot{\tau}_u<1$ \emph{does not} hold. We choose $r_{\text{max}}$ and $N$ given in the first column and the first two lines of Table \ref{Tab:PosTauTheta}. For the initial condition $z(x,0)= 10x^2(1-x)^2$ we do simulations of the closed-loop systems \eqref{eq:X0Delay} (without predictor) and \eqref{eq:bXDelay} (with predictor) and the ODEs satisfied by $\hat{z}^{N-N_0}(t)$. In both cases, we simulate the ODEs of $z_n(t)$ for $N+1\leq n\leq 50$. The value of $\zeta(t)$, given by \eqref{eq:IntroZetaNonDelayed}, is approximated by $\zeta(t) \approx \sum_{n=N+1}^{50}c_nz_n(t)$. Results of the simulations are given at the top of Figure \ref{fig:Fig1} and confirm our theoretical results. Moreover, a simulation for $r = 0.22$ and $N=30$ without predictor shows
instability (see the bottom of Figure \ref{fig:Fig1}). The use of predictor allows to stabilize for a larger $r=0.3$ with  $N=30$.
\begin{figure}
	\centering
	\includegraphics[width=80mm,scale=1]{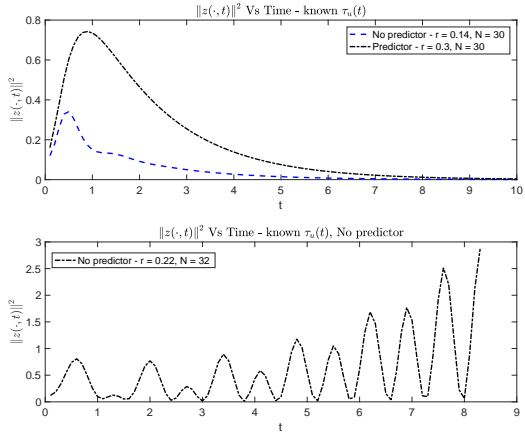}
	\vspace{-0.3cm}
	\caption{{\color{blue} Simulation results for known $\tau_u$. Top: stability confirming the LMI results. Bottom: instability without predictor}
}\label{fig:Fig1}
\end{figure}

\section{Conclusion}
\vspace{-0.3cm}
We suggested a finite-dimensional observer-based control of the 1D heat equation under Neumann actuation, non-local measurement and  fast-varying input/output delays.
Reduced-order LMI stability conditions were derived. 
Classical predictors were used to  enlarge the delays.

\bibliographystyle{abbrv}
\bibliography{Bibliography021218}

\begin{thebibliography}{10}

\bibitem{Artstein82}
Z.~Artstein.
\newblock Linear systems with delayed controls: a reduction.
\newblock {\em IEEE Transactions on Automatic Control}, 27(4):869--879, 1982.

\bibitem{balas1988finite}
M.~J. Balas.
\newblock Finite-dimensional controllers for linear distributed parameter
  systems: exponential stability using residual mode filters.
\newblock {\em Journal of Mathematical Analysis and Applications},
  133(2):283--296, 1988.

\bibitem{christofides2001}
P.~Christofides.
\newblock {\em Nonlinear and Robust Control of PDE Systems: Methods and
  Applications to transport reaction processes}.
\newblock Springer, 2001.

\bibitem{curtain1982finite}
R.~Curtain.
\newblock Finite-dimensional compensator design for parabolic distributed
  systems with point sensors and boundary input.
\newblock {\em IEEE Transactions on Automatic Control}, 27(1):98--104, 1982.

\bibitem{curtain2009transfer}
R.~Curtain and K.~Morris.
\newblock Transfer functions of distributed parameter systems: {A} tutorial.
\newblock {\em Automatica}, 45(5):1101--1116, 2009.

\bibitem{espitia2020event}
N.~Espitia, I.~Karafyllis, and M.~Krstic.
\newblock Event-triggered boundary control of constant-parameter
  reaction--diffusion \uppercase{PDE}s: a small-gain approach.
\newblock {\em Automatica}, 128:109562, 2021.

\bibitem{Fridman14_TDS}
E.~Fridman.
\newblock {\em Introduction to time-delay systems: analysis and control}.
\newblock Birkhauser, Systems and Control: Foundations and Applications, 2014.

\bibitem{Aut12}
E.~Fridman and A.~Blighovsky.
\newblock Robust sampled-data control of a class of semilinear parabolic
  systems.
\newblock {\em Automatica}, 48:826--836, 2012.

\bibitem{ghantasala2012active}
S.~Ghantasala and N.~El-Farra.
\newblock Active fault-tolerant control of sampled-data nonlinear distributed
  parameter systems.
\newblock {\em International Journal of Robust and Nonlinear Control},
  22(1):24--42, 2012.

\bibitem{harkort2011finite}
C.~Harkort and J.~Deutscher.
\newblock Finite-dimensional observer-based control of linear distributed
  parameter systems using cascaded output observers.
\newblock {\em International journal of control}, 84(1):107--122, 2011.

\bibitem{kang2018distributed}
W.~Kang and E.~Fridman.
\newblock Distributed sampled-data control of
  \uppercase{K}uramoto-\uppercase{S}ivashinsky equation.
\newblock {\em Automatica}, 95:514--524, 2018.

\bibitem{karafyllis2017predictor}
I.~Karafyllis and M.~Krstic.
\newblock {\em Predictor feedback for delay systems: Implementations and
  approximations}.
\newblock Springer, 2017.

\bibitem{karafyllis2018sampled}
I.~Karafyllis and M.~Krstic.
\newblock Sampled-data boundary feedback control of 1-\uppercase{D} parabolic
  \uppercase{PDE}s.
\newblock {\em Automatica}, 87:226--237, 2018.

\bibitem{katz2020Reduced}
R.~Katz, I.~Basre, and E.~Fridman.
\newblock Delayed finite-dimensional observer-based control of 1\uppercase{D}
  heat equation under \uppercase{N}eumann actuation.
\newblock In {\em 2021 European Control Conference}, 2021.

\bibitem{RamiContructiveFiniteDim}
R.~Katz and E.~Fridman.
\newblock Constructive method for finite-dimensional observer-based control of
  1-\uppercase{D} parabolic \uppercase{PDE}s.
\newblock {\em Automatica}, 122:109285, 2020.

\bibitem{Rami_CDC20}
R.~Katz and E.~Fridman.
\newblock Finite-dimensional control of the {K}uramoto-{S}ivashinsky equation
  under point measurement and actuation.
\newblock In {\em 59th IEEE Conference on Decision and Control}, 2020.

\bibitem{katz2020constructiveDelay}
R.~Katz and E.~Fridman.
\newblock Delayed finite-dimensional observer-based control of 1-\uppercase{D}
  parabolic \uppercase{PDE}s.
\newblock {\em Automatica}, 123:109364, 2021.

\bibitem{katz2020finite}
R.~Katz and E.~Fridman.
\newblock Finite-dimensional control of the heat equation: {Dirichlet}
  actuation and point measurement.
\newblock {\em European Journal of Control}, 2021.

\bibitem{katz2020boundary}
R.~Katz, E.~Fridman, and A.~Selivanov.
\newblock Boundary delayed observer-controller design for reaction-diffusion
  systems.
\newblock {\em IEEE Transactions on Automatic Control}, 2021.

\bibitem{Krstic09}
M.~Krstic.
\newblock {\em Delay Compensation for Nonlinear, Adaptive, and PDE Systems}.
\newblock Birkhauser, Boston, 2009.

\bibitem{Krstic2008}
M.~Krstic and A.~Smyshlyaev.
\newblock {\em {Boundary Control of PDEs: A Course on Backstepping Designs}}.
\newblock SIAM, 2008.

\bibitem{lasiecka2000control}
I.~Lasiecka and R.~Triggiani.
\newblock {\em Control theory for partial differential equations: Volume 1,
  Abstract parabolic systems: Continuous and approximation theories}, volume~1.
\newblock Cambridge University Press, 2000.

\bibitem{lhachemi2019lmi}
H.~Lhachemi, C.~Prieur, and R.~Shorten.
\newblock An {LMI} condition for the robustness of constant-delay linear
  predictor feedback with respect to uncertain time-varying input delays.
\newblock {\em Automatica}, 109:108551, 2019.

\bibitem{liu2018distributed}
K.-Z. Liu, X.-M. Sun, and M.~Krstic.
\newblock Distributed predictor-based stabilization of continuous
  interconnected systems with input delays.
\newblock {\em Automatica}, 91:69--78, 2018.

\bibitem{Mazenc13}
F.~Mazenc and D.~Normand-Cyrot.
\newblock Reduction model approach for linear systems with sampled delayed
  inputs.
\newblock {\em IEEE Transactions on Automatic Control}, 58(5):1263--1268, 2013.

\bibitem{pazy1983semigroups}
A.~Pazy.
\newblock {\em Semigroups of linear operators and applications to partial
  differential equations}, volume~44.
\newblock Springer New York, 1983.

\bibitem{prieur2018feedback}
C.~Prieur and E.~Tr{\'e}lat.
\newblock Feedback stabilization of a 1-\uppercase{D} linear
  reaction--diffusion equation with delay boundary control.
\newblock {\em IEEE Transactions on Automatic Control}, 64(4):1415--1425, 2018.

\bibitem{selivanov2016observer}
A.~Selivanov and E.~Fridman.
\newblock Observer-based input-to-state stabilization of networked control
  systems with large uncertain delays.
\newblock {\em Automatica}, 74:63--70, 2016.

\bibitem{selivanov2019delayed}
A.~Selivanov and E.~Fridman.
\newblock Delayed {$ H_\infty$} control of 2\uppercase{D} diffusion systems
  under delayed pointlike measurements.
\newblock {\em Automatica}, 109:108541, 2019.

\bibitem{zhu2020observer}
Y.~Zhu and E.~Fridman.
\newblock Observer-based decentralized predictor control for large-scale
  interconnected systems with large delays.
\newblock {\em IEEE Transactions on Automatic Control}, 2020.

\end{thebibliography}

\end{document}